\newcommand{\comment}[1]{}
\newif\ifpdf
\newtheorem{thm}{Theorem}[section]
\newtheorem{observation}[thm]{Observation}
\newtheorem{proposition}[thm]{Proposition}
\newtheorem{corollary}[thm]{Corollary}
\newtheorem{lemma}[thm]{Lemma}
\newtheorem{definition}[thm]{Definition}
\newtheorem{theorem}[thm]{Theorem}
\newtheorem{remark}[thm]{Remark}
\begin{document}

\title{Absence of $1$-Nearly Platonic Graphs}
\author{{ Mahdi Reza Khorsandi and Seyed Reza Musawi}
\\{ \small Faculty of Mathematical Sciences, Shahrood University of Technology,}\\
{\small P.O. Box 36199-95161, Shahrood, Iran.}\\ 
{\small khorsandi@shahroodut.ac.ir  and r\_ musawi@shahroodut.ac.ir  }}
\date{}
\maketitle



\begin{abstract}
A $t$-nearly platonic graph is a finite, connected, regular, simple and planar graph in which all but exactly $t$ numbers of its faces have the same length. It is proved that there is no 2-connected $1$-nearly platonic graph. In this paper, we prove that there is no $1$-nearly platonic graph. \\
\textbf{Keywords: }
planar graph, regular graph, nearly platonic graph.\\
\textbf{Mathematics classification 2010: } 05C07, 05C10.
\end{abstract}

\section{Introduction}
Throughout this paper, all graphs we consider are finite, simple, connected, planar, undirected and non-trivial graph. Suppose that $G=(V,E)$ is a graph with the vertex set $V$ and the edge set $E$. We recall some of the essential concepts, for more details and other terminologies see \cite{Diestel, Bondy-Murty,West-2001}.

A graph is said to be planar, or embeddable in the plane, if it can be drawn in the plane such that each common point  of two edges is a vertex. This drawing of a planar graph $G$ is called a planar embedding of $G$ and can itself be regarded as a graph isomorphic to $G$. Sometimes, we call a planar embedding of a graph as \textit{plane graph}. By this definition, it is clear that we need some matters of the topology of the plane. Immediately, after deleting the points of a plane graph from the plane, we have some maximal open sets (=regions) of points in the plane that is called as faces of the plane graph. There exist exactly one unbounded region that we call it as \textit{outerface} of the plane graph and other faces (if there exist some bounded regions) is called as \textit{internal face}. 
The frontiers of each region is called as the boundary of the corresponding face. The boundary of a face is the set of the points corresponding to some vertices and some edges. In the graph-theoretic language, the boundary of a face is a closed walk. 
A face is said to be incident with the vertices and edges in its boundary, and two faces are adjacent if their boundaries have an edge in common. We denote the boundary of a face $F$ by $\partial(F)$. An \textit{outerplanar graph} is a planar  graph that its outerface is incident with all vertices.

\begin{lemma}\label{rem} \cite[Proposition 6.1.20]{West-2001} Every simple outerplanar graph with at least four vertices has at least two nonadjacent vertices of degree at most 2.
\end{lemma}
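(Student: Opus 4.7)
The plan is to reduce to a maximal outerplanar graph and exploit its weak-dual tree.

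First I would reduce to a connected, maximal outerplanar graph $G'$. If $G$ is disconnected, the conclusion follows from a component argument: either some component has $\ge 4$ vertices (apply the result inductively on that component), or every component has $\le 3$ vertices and hence every vertex has degree $\le 2$, in which case two vertices from distinct components work. If $G$ is connected, fix an outerplanar embedding and add chords inside the bounded faces until no further edge can be added while preserving outerplanarity; this yields a maximal outerplanar supergraph $G'$ of $G$ on the same vertex set. Because $G \subseteq G'$, non-edges of $G'$ remain non-edges of $G$ and $\deg_G(v) \le \deg_{G'}(v)$ for every vertex $v$, so it suffices to produce two non-adjacent degree-$2$ vertices in $G'$.

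Next, I would invoke the standard structure of a maximal outerplanar graph on $n \ge 3$ vertices: it is $2$-connected, its outer face is a Hamilton cycle $C$, and its $n-2$ internal faces are triangles. Consider the weak dual $T$ whose vertices are the internal triangles and whose edges join two triangles sharing an edge (necessarily a chord of $C$). It is well known that $T$ is a tree: connectedness follows since any two triangles can be linked by a sequence of edge-adjacent triangles, and acyclicity follows from outerplanarity, as a cycle in $T$ would enclose a region containing a vertex not on $C$.

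For $n \ge 4$ we have $|V(T)| = n-2 \ge 2$, so $T$ has two leaves $F_1$ and $F_2$. Each leaf $F_i$ has exactly one edge shared with a neighboring triangle (a chord of $C$) and two edges on $C$; let $u_i$ be the vertex of $F_i$ opposite the chord. Since each $C$-edge lies on the boundary of exactly one internal triangle, $u_i$ is contained in no triangle other than $F_i$; as every edge incident to $u_i$ must bound an internal triangle containing $u_i$, this forces $\deg_{G'}(u_i) = 2$. It remains to rule out $u_1u_2 \in E(G')$: both neighbors of each $u_i$ lie on $C$, so $u_1u_2$ would be a $C$-edge, and the internal triangle on its interior side would contain a common neighbor $x$ of $u_1$ and $u_2$; but each $u_i$ has only one further neighbor on $C$, so both these remaining neighbors would have to equal $x$, collapsing $C$ to a triangle and contradicting $n \ge 4$.

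The main obstacle is this last step: the non-adjacency of the two ear vertices cannot be extracted from degree considerations alone and requires combining the Hamilton-cycle structure of the outer face with the triangulation of the interior.
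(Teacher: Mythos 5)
The paper gives no proof of this lemma at all --- it is quoted directly from West (Proposition 6.1.20) --- so there is no internal argument to compare yours with; the only question is whether your blind proof stands on its own. Your route is the standard one: pass to a maximal outerplanar supergraph $G'$ (degrees only increase and non-edges of $G'$ are non-edges of $G$), use that $G'$ is a triangulation of a polygon with Hamiltonian outer cycle $C$, take two leaves of the weak dual tree, and exhibit their apex vertices. The skeleton is sound, and your closing non-adjacency argument is fine: if $u_1u_2$ were an edge it would be a $C$-edge, the internal triangle on it would give a common neighbour $x$, and then $C$ would collapse to the triangle $u_1u_2x$, contradicting $n\ge 4$.

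The step you should tighten is the claim that the apex $u_i$ of a leaf triangle $F_i$ has degree $2$. You justify ``$u_i$ lies in no triangle other than $F_i$'' by the observation that each $C$-edge lies on only one internal triangle; but that only excludes a second triangle at $u_i$ that uses one of the two $C$-edges at $u_i$ --- it does not exclude a triangle at $u_i$ bounded by two chords through $u_i$ (around the hub of a fan triangulation most incident triangles use no $C$-edge at all). Since ``no chord leaves $u_i$'' is exactly what you are trying to prove, the sentence as written is essentially circular. The standard repair is local: in the rotation at $u_i$ the faces occupy the corners between consecutive incident edges; the outer face occupies the corner between the two $C$-edges at $u_i$, while $F_i$, being incident to $u_i$ through both of those edges, must occupy a corner bounded by the same two edges, which is possible only if they are the only edges at $u_i$. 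Equivalently, a chord from $u_i$ would have to enter the open face $F_i$, which is impossible in a plane embedding. Once this is fixed you also get $u_1\ne u_2$ for free (each $u_i$ lies only in its own leaf triangle), a point worth stating explicitly since the lemma asks for two distinct vertices.
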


\begin{proposition}\cite[Proposition 10.5]{Bondy-Murty}
Let $G$ be a planar graph, and let $f$ be a face in some planar embedding of $G$. Then $G$ admits a planar embedding whose outerface has the same boundary as $f$ .
\end{proposition}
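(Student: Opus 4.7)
The plan is to pass to the sphere, where no face is distinguished as unbounded, and then re-project so that the chosen face becomes the outer one. Concretely, I would fix the given planar embedding $\pi$ of $G$ in $\mathbb{R}^2$ in which $f$ is a face, and then use (the inverse of) stereographic projection from a point $N\in S^2$ not lying on the image of $\pi$ to transfer $\pi$ to an embedding $\widetilde\pi$ of $G$ in the sphere $S^2$. Since stereographic projection is a homeomorphism between $S^2\setminus\{N\}$ and $\mathbb{R}^2$, the connected components of $S^2\setminus\widetilde\pi(G)$ correspond bijectively to the faces of $\pi$, with the outerface of $\pi$ corresponding to the face of $\widetilde\pi$ that contains $N$. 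In particular, there is a face $\widetilde f$ of $\widetilde\pi$ whose boundary, as a closed walk in $G$, coincides with $\partial(f)$.

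Next I would choose any point $p$ in the interior of $\widetilde f$ and apply stereographic projection from $p$. This yields a homeomorphism $S^2\setminus\{p\}\to\mathbb{R}^2$, and composing with $\widetilde\pi$ produces a new planar embedding $\pi'$ of $G$. Because $p$ lies inside $\widetilde f$, the image of $\widetilde f\setminus\{p\}$ under the projection is precisely the unbounded component of $\mathbb{R}^2\setminus\pi'(G)$; that is, $\widetilde f$ becomes the outerface of $\pi'$. The boundary walk is unaffected by the projection, so the outerface of $\pi'$ has boundary $\partial(\widetilde f)=\partial(f)$.

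The only delicate point is asserting that the combinatorial boundary walk of a face is preserved under the two homeomorphisms used. This follows because the boundary walk is encoded by the cyclic order of edges around each vertex together with the face-tracing rule, and both the inverse stereographic projection $\mathbb{R}^2\to S^2\setminus\{N\}$ and the stereographic projection from $p$ are orientation-preserving homeomorphisms on a neighborhood of each vertex. Hence the rotation system is preserved throughout, and so is the boundary walk of every face, completing the argument.
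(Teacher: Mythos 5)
Your argument is correct and is essentially the standard proof of this fact (the one given in Bondy--Murty, which the paper simply cites): transfer the embedding to the sphere by inverse stereographic projection and re-project from a point interior to the chosen face, noting that the sphere homeomorphisms preserve faces and their boundary walks. Nothing further is needed.
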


A graph $G$ is called $k$-regular when the degrees of all vertices are equal to $k$. A regular graph is one that is $k$-regular for some $k$. Let $G=(V,E)$ be a graph with the vertex set $V$ and the edge set $E$. We will show the number of vertices of $G$ by $n=|V|$, the number of edges of $G$ by $m=|E|$ and the number of faces of $G$ by $f$. The \textit{Euler's formula} states that if $G$ is a connected planar graph, then: \[m-n=f-2\]

\begin{definition}
Let $G$ be a graph and $S\subseteq V(G)$. Then $\langle S\rangle$,  the induced subgraph by $S$, denotes the graph on $S$ whose edges are precisely the edges of $G$ with both ends in $S$. Also, $G-S$ is obtained from $G$ by deleting all the vertices in $S$ and their incident edges. If $S=\{x\}$ is a singleton, then we write $G-x$ rather than $G-\{x\}$.
\end{definition}

The \textit{length} of a face in a plane graph $G$ is the total length of the closed walk(s) in $G$ bounding the face. A cut-edge belongs to the boundary of only one face, and it contributes twice to its length (see \cite[Example 6.1.12]{West-2001}). $G$ is called $k$-connected (for $k\in \mathbb{N}$) if $|V(G)|>k$ and $G-X$ is connected for every set $X\subseteq V(G)$ with $|X|<k$.
If $|V(G)|>1$ and $G-F$ is connected for every set $F\subseteq E(G)$  of fewer than $\ell$ edges, then $G$ is called $\ell$-edge-connected. 

\begin{proposition}\label{2e} \cite[Proposition 6.1.13]{West-2001} If $\ell(F_i)$ denotes the length of the face $F_i$ in a plane graph $G$, then $2m =\underset{i}{\sum} \ell(F_i)$ . 
\end{proposition}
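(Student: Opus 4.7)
The plan is to prove the identity by double counting edge–face incidences, where each edge $e\in E(G)$ is weighted by how often it appears in the boundary walks of the faces of $G$. Because each $\ell(F_i)$ is defined as the length of the closed walk bounding $F_i$, we have
\[
\sum_{i}\ell(F_i)\;=\;\sum_{e\in E(G)} w(e),
\]
where $w(e)$ counts the total number of times $e$ is traversed across all face boundary walks. The theorem then reduces to the claim that $w(e)=2$ for every edge $e$.

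To verify this, I would split into the two cases suggested by the definition of face length recalled just before the proposition. First, suppose $e$ is not a cut-edge. Then $G-e$ is still connected, so by a standard planar topology argument (the Jordan curve theorem applied to a simple closed curve passing through a midpoint of $e$) the two sides of $e$ lie in two distinct faces $F_i$ and $F_j$. Each of these boundary walks contains $e$ exactly once, so $w(e)=1+1=2$. Second, suppose $e$ is a cut-edge. Then both sides of $e$ belong to the same face $F_i$, and the boundary walk of $F_i$ traverses $e$ twice (once on each side); this is exactly the statement quoted in the excerpt from \cite[Example 6.1.12]{West-2001}. Hence again $w(e)=2$.

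Summing over all edges yields $\sum_i \ell(F_i) = \sum_{e\in E(G)} 2 = 2m$, which is the desired identity.

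The only delicate step is the non-cut-edge case, where one must argue that the two ``sides'' of $e$ really do belong to different faces. A clean way to avoid a direct topological argument is to use Euler's formula as a black box: for any non-cut-edge $e$, the graph $G-e$ is connected planar with $m-1$ edges, and deleting $e$ merges the two faces incident to $e$ into one, so the number of faces drops by exactly one. This confirms that a non-cut-edge is incident to two distinct faces and contributes $1$ to the length of each, which is really the crux of the proof.
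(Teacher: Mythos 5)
The paper gives no proof of this proposition---it is quoted verbatim from \cite{West-2001}---and your double counting of edge--face incidences (a non-cut-edge contributing once to each of the two distinct faces on its sides, a cut-edge contributing twice to its single face) is exactly the standard argument behind that cited result, so your proof is correct and essentially the same approach. Your Euler-formula fallback for showing a non-cut-edge borders two distinct faces is also legitimate in this paper's context, since Euler's formula is stated here independently of the present proposition.
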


\begin{theorem} \cite[Page 312] {West-2001}
 In a 2-edge-connected plane graph, all facial boundaries are cycles and each edge lies in the boundary of two faces. 
\end{theorem}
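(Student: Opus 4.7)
The plan is to handle the two assertions separately, leaning on the facts about face boundaries already collected in the excerpt.

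For the claim that every edge lies in the boundary of two faces, I would invoke the observation already stated before Proposition~\ref{2e}: a cut-edge is in the boundary of exactly one face (contributing twice to its length), whereas any non-cut-edge separates two distinct faces. A 2-edge-connected plane graph has no cut-edge by definition, so every edge must lie in the boundary of exactly two faces. This direction is essentially a one-line consequence of what has been recorded, so the substance is in the second assertion.

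For the claim that every facial boundary is a cycle, the plan is to proceed by induction on $|E(G)|$. The base case is when $G$ is itself a cycle $C_n$, where there are exactly two faces and both boundaries are $C_n$ itself. For the inductive step, since $G$ is 2-edge-connected, choose any edge $e=uv$; by the first assertion, $e$ separates two distinct faces $F_1,F_2$. Consider $G'=G-e$. Because $e$ was not a cut-edge, $G'$ is still connected, and the planar embedding of $G$ restricts to one of $G'$ in which the two faces $F_1,F_2$ of $G$ have merged into a single face $F'$ whose boundary walk is obtained by concatenating $\partial(F_1)$ and $\partial(F_2)$ along $e$. If $G'$ happens to remain 2-edge-connected, then by induction $\partial(F')$ is a cycle, and reinserting the chord $e$ splits it into the two cycles $\partial(F_1)$ and $\partial(F_2)$ as required; all other face boundaries are unaffected and are cycles by induction.

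The delicate case is when $G-e$ is no longer 2-edge-connected, i.e.\ when $G$ contains a second edge $e'$ such that $\{e,e'\}$ is an edge cut. Here I would argue directly via a topological/Jordan-curve argument: if some face boundary walk $W$ visited a vertex $w$ twice, then inside the closed region bounded by $F$ we could split $W$ at $w$ into two closed sub-walks $W_1,W_2$, each enclosing a nonempty planar sub-region; the only edges leaving these sub-regions would have to pass through $w$, exhibiting either a cut vertex whose removal disconnects $G$ into pieces joined only through $w$, or a 2-edge cut whose removal leaves $w$ separating the pieces --- both of which can be converted into a contradiction with 2-edge-connectivity by choosing the minimal counterexample. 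I expect the main obstacle to be precisely this topological-to-combinatorial translation, so I would formalize it by viewing $W$ as a closed curve in the plane, applying the Jordan Curve Theorem, and then extracting the claimed edge cut from the combinatorics of the embedding.
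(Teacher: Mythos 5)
The paper offers no proof of this statement at all---it is quoted from West as a known result---so your proposal has to stand on its own, and it has a genuine gap at exactly the ``delicate case'' you single out. There you claim that a facial walk repeating a vertex $w$ exhibits ``either a cut vertex or a 2-edge cut, both of which can be converted into a contradiction with 2-edge-connectivity.'' A cut vertex does \emph{not} contradict 2-edge-connectivity, and in fact the ``all facial boundaries are cycles'' half of the statement, read literally with the 2-edge-connected hypothesis, is false: take two triangles sharing a single vertex $w$ (the bowtie graph). Every edge lies on a triangle, so deleting any single edge leaves the graph connected and it is 2-edge-connected; yet the boundary walk of its outer face passes through $w$ twice and is not a cycle. (Note that in this example every edge still lies on two faces, so only the cycle half is affected.) Consequently no amount of Jordan-curve bookkeeping can close the delicate case as you have set it up. The hypothesis that half really needs is vertex 2-connectivity, which is also the only setting in which the paper applies the result (the graphs in Theorems \ref{i-vertex} and \ref{th3.8} are 2-connected); under 2-connectivity the argument you sketch does work, because the repeated vertex $w$ is then shown to be a cut vertex, an immediate contradiction.

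Two smaller points. First, your treatment of the ``each edge lies on two faces'' half quietly uses the converse of what the paper records: the paper states only that a cut-edge lies on the boundary of one face, whereas you need that a non-cut-edge (equivalently an edge on a cycle, which every edge of a 2-edge-connected graph is) bounds two \emph{distinct} faces. That direction is the substantive one and itself requires a Jordan-curve argument, so it is not a one-line consequence of the recorded remark, though it is standard. Second, even the ``nice'' branch of your induction cannot be relied upon: a 2-edge-connected graph that is not a cycle need not contain any edge $e$ with $G-e$ still 2-edge-connected (again the bowtie: deleting any edge creates a cut-edge), so every such graph may be funneled into the delicate case, which therefore carries the whole weight of the proof. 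A cleaner route for the cycle statement is the direct argument under 2-connectivity (a repeated vertex on a facial walk is a cut vertex) or an induction along an ear decomposition.
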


\begin{figwindow}[2,r,{
\includegraphics[scale=.4,angle=90]{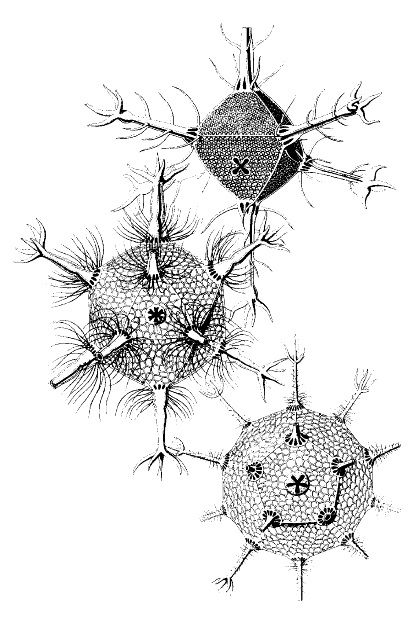}
},{3 radiolarians}\label{radiolarians}]
Platonic solids are a well-known five-membered family of 3-dimensional polyhedra. There is no reliable information about their date of birth, and different opinions have been taken \cite{Atiyah-Sutcliffe,Lloyd}. However, they are attractive for mathematicians and others, in terms of some symmetries that they have. In the last two centuries, many of authors have paid attention to the polyhedra and they have extended it to convex and concave polytopes in the different dimensions \cite{Grunbaum}. Older scientists, such as Kepler and Plato, describe the properties of Platonic solids that we know that  they are not right, but nowadays, with the new advances in a variety of sciences such as physics, chemistry, biology, etc., we observe some applications of polyhedra and especially platonic solids (see \cite{Atiyah-Sutcliffe} and \cite[Figure \ref{radiolarians}]{Weyl-1952}). 
\end{figwindow}

But what matters from the combinatorial point of view is that a convex polyhedron can be embedded on a sphere, and then we can map it on a plane so that the images of lines on the sphere do not cut each other in the plane. In this way, we have corresponded a polyhedron on the sphere with a planar graph in the plane. 
Steinitz's theorem (see \cite[p. 235]{Grunbaum}) states that a graph $G$ with at least four vertices is the network of vertices and edges of a convex polyhedron if and only if $G$ is planar and $3$-connected.  
In 1967, Grunbaum considered 3-regular and connected planar graphs and he got some results. For example, for a $3$-regular connected planar graph and $k\in\{2,3,4,5\}$,  it is proved that if the length of all faces but $t$ faces is divisible by $k$ then $t\ge2$ and if $t=2$ then two exceptional faces have not a common vertex \cite{Grunbaum}. In 1968, in his Ph.D thesis, Malkevitch proved the same results for 4 and 5-regular 3-connected planar graphs \cite{Malkevitch-2}.  
Several papers are devoted to the study of this topic, but all of them have considered the planar graphs such that the lengths of all faces but some exceptional faces are a multiple of $k$ and $k\in \{2,3,4,5\}$ (see \cite{Crowe,Jendrol-Jucovic,Jendrol,Hornak-Jendrol}).

Recently, Keith et al. in \cite{Keith-Froncek-Kreher-1}, defined a $t$-nearly platonic graph to be a finite $k$-regular simple planar graph in which all faces, with the exception of $t$ numbers of the faces, have the same length.  They proved that there is no 1-nearly platonic graph. 
  However, their proof is only valid for 2-connected graphs (see \cite{Keith-Froncek-Kreher-2}). In this paper, we prove that there is no 1-nearly platonic graph. This is a strength of the Theorem  1 in \cite{Keith-Froncek-Kreher-2} and  completes the proof of the Theorem 6 in \cite{Keith-Froncek-Kreher-1} about 1-nearly platonic graph.
 
%
%
%

%
%
%
%
%




\section{Absence of 1-nearly platonic graphs}

\begin{definition}
A $k$-regular simple connected planar graph is a $(k;d_1^{f_1}d_2^{f_2}\cdots d_{\ell}^{f_{\ell}})$-graph if it has $f_i$ faces of degree $d_i$, $i=1,2,\cdots,\ell$, where $f=f_1+f_2+\cdots+f_{\ell}$.
\end{definition}
\begin{definition}
A $t$-nearly platonic graph is a finite, connected, regular, simple and planar graph in which all but exactly $t$ numbers of its faces have the same length. For simplicity, we show a $t$-nearly platonic graph by $t$-NPG  or $k$-regular $t$-NPG to emphasize the valency of graph. 
\end{definition}

If the graph $G$ is a $k$-regular $t$-NPG, then by planarity of $G$, it is obvious that $k\in\{1,2,3,4,5\}$. For $k=1$ we have $G=K_2$ with one face and for $k=2$ we have $G=C_n$ with two faces with the same lengths, so we have nothing to say. Hence, from now on we assume that $k\in\{3,4,5\}$ and so $n\ge4$, $m\ge6$ and $f\ge4$. 
A $k$-regular $1$-NPG can be written as $(k;d_1^{f-1}d_2^{1})$-graph such that $d_1,d_2\ge3$, $d_1\ne d_2$ and the unique face with the length $d_2$ is called the exceptional face of the $1$-NPG. Keith et al. \cite{Keith-Froncek-Kreher-1} proved that there is no 2-connected $(k;d_1^{f-1}d_2^{1})$-graph. In other words:

\begin{theorem}\label{th.2.3}\cite[Theorem 1]{Keith-Froncek-Kreher-2}
There is no finite, planar, 2-connected regular graph that has all but one face of one degree (length) and a single face of a different degree (length).
\end{theorem}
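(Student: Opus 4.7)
The plan is to combine the identities from Euler's formula, handshaking, and Proposition \ref{2e} to reduce $(k,d_1)$ to a short list, and then to exploit $2$-connectedness to analyze the local structure around the exceptional face and derive a contradiction in each remaining case.

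Let $G$ be a hypothetical $2$-connected $k$-regular $1$-NPG with $f-1$ faces of length $d_1$ and one exceptional face of length $d_2 \neq d_1$. From $kn=2m$, $(f-1)d_1+d_2 = 2m$ and $f = 2+m-n$ I eliminate $m$ and $f$ to obtain
\[
n\bigl(2k+2d_1-kd_1\bigr) \;=\; 2(d_1+d_2).
\]
Since $n>0$ and $d_1+d_2>0$, the coefficient on the left must be positive, which, together with $k,d_1\ge 3$, forces $\tfrac{1}{k}+\tfrac{1}{d_1}>\tfrac{1}{2}$ and leaves only the five platonic pairs $(k,d_1)\in\{(3,3),(3,4),(3,5),(4,3),(5,3)\}$. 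For each pair, the equation determines $n$, $m$, and $f$ as explicit linear functions of $d_2$.

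Next I use the structural consequences of $2$-connectedness. Being $2$-connected, $G$ is also $2$-edge-connected, so by the theorem quoted from page $312$ of West every face boundary is a cycle; in particular the exceptional face is bounded by a cycle $C=v_1v_2\cdots v_{d_2}$, which I take to be the outer face. Each $v_i$ has degree $k$ and contributes two edges to $C$, so has precisely $k-2$ additional (non-boundary) edges, and every other face is a $d_1$-gon.

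The main obstacle is the case analysis ruling out each of the five pairs, and my template is the following: for each $i$ the interior face incident with edge $v_iv_{i+1}$ is a $d_1$-gon whose $d_1-2$ remaining vertices are forced, by the tight degree budget of $v_i$ and $v_{i+1}$, to involve specified additional neighbors of $v_i$ and $v_{i+1}$; iterating around $C$ either collapses all such neighbors into a single interior vertex of impossibly large degree, or else produces an inner ``ring'' of $d_2$ vertices that itself bounds a face of length $d_2 \neq d_1$, contradicting the $1$-NPG assumption. The pair $(3,3)$ collapses immediately, since the shared apex of consecutive triangles must coincide, giving a common interior vertex of degree $d_2$ and hence $d_2=3=d_1$. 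The pairs $(3,4)$ and $(4,3)$ yield a ``prism collapse'' in a single inner layer. The delicate pairs are $(3,5)$ and $(5,3)$, where multiple nested inner layers can arise and the inward iteration must be combined with the arithmetic constraint from the displayed equation, or replaced by a discharging computation around $C$, to produce the final contradiction.
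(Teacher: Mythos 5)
First, note that the paper does not actually prove this statement: Theorem \ref{th.2.3} is imported verbatim from the corrigendum of Keith, Froncek and Kreher, and the whole point of the present paper is to remove the $2$-connectivity hypothesis by a different argument (Theorems \ref{i-vertex} and \ref{th3.8}). So your proposal is really an attempt to reprove the cited result, and it has to stand on its own.

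Your arithmetic reduction is correct: eliminating $m$ and $f$ from $kn=2m$, $(f-1)d_1+d_2=2m$ and Euler's formula does give $n(2k+2d_1-kd_1)=2(d_1+d_2)$, hence $\tfrac1k+\tfrac1{d_1}>\tfrac12$ and the five platonic pairs, with $n$ a linear function of $d_2$ in each case. The gap is in the structural half. For the pairs $(3,3)$, $(3,4)$, $(4,3)$ your ``collapse to a hub'' and ``prism/antiprism ring'' arguments are essentially right, but you silently assume that the interior $d_1$-gon on each outer edge introduces new, pairwise distinct vertices; the degenerate identifications (an apex coinciding with another vertex of the outer cycle, chords of $C$, two ring vertices coinciding) must be excluded, and this is exactly the kind of bookkeeping on which the paper's analogous proofs (Theorem \ref{i-vertex}(ii)--(iv) and the cases of Theorem \ref{th3.8}) spend most of their effort, using the cut-vertex/inflorescence argument that is available only after $2$-connectivity is invoked. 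More seriously, for the two pairs that carry all the difficulty, $(3,5)$ and $(5,3)$, you give no argument at all: since $d_2$ is arbitrary, the first ring of faces around $C$ does not close the graph, the number of nested layers grows with $d_2$, and a layer-by-layer forcing of the kind that works in Theorem \ref{th3.8} (where the picture is finite because everything is forced from one degree-deficient vertex) does not terminate by itself. Saying the iteration ``must be combined with the arithmetic constraint, or replaced by a discharging computation'' names a hoped-for tool, not a proof; this is precisely the hard content of the Keith--Froncek--Kreher theorem, so as it stands the proposal proves only three of the five cases and leaves the decisive ones open.
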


\begin{definition} In a planar graph, we call a vertex or an edge as an e-vertex or an e-edge, respectively, if it lies on the boundary of the outerface, otherwise, we call them as an i-vertex or an i-edges. 
\end{definition}

\begin{observation}\label{rem111}
\begin{itemize}
  \setlength\itemsep{.01em}
\item[(i)]
In each planar graph, all edges passing through an i-vertex are i-edges.
\item[(i)]
In a 2-connected planar graph, precisely two edges passing through an e-vertex are e-edges and others are i-edges.
\end{itemize}
\end{observation}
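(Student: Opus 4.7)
The plan for part~(i) is a one-line contradiction straight from the definition of the boundary of a face. The boundary of the outerface contains the endpoints of every edge that lies on it, so any e-edge automatically has both of its endpoints among the e-vertices. Consequently, if $v$ is an i-vertex and $vu$ is any edge at $v$, then $vu$ cannot belong to $\partial(\text{outerface})$, for otherwise $v$ would end up on that boundary, contradicting its i-vertex status. Hence every edge through $v$ is an i-edge.

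For part~(ii), I would invoke the theorem stated just before the observation: in a 2-connected plane graph, every facial boundary is a cycle. Let $C$ denote the cycle bounding the outerface, so that the e-edges are precisely the edges of $C$. For any e-vertex $v$, the cycle $C$ passes through $v$ exactly once and therefore uses exactly two of its incident edges; these two edges are the e-edges at $v$, while every remaining edge incident to $v$ does not lie on $C$ and is therefore an i-edge.

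I do not expect any real obstacles, since both parts reduce to unpacking definitions once the cited structural results are in hand. The only delicate point, and the reason the 2-connectedness hypothesis is essential in part~(ii), is that in a merely connected plane graph the boundary of the outerface is a closed walk that may revisit a vertex or traverse a cut-edge twice; without the cycle structure an e-vertex could be incident to more than two e-edges. The theorem quoted above converts that closed walk into a simple cycle, after which the ``exactly two'' count is automatic from the standard fact that a cycle uses two edges at each of its vertices.
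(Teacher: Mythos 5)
Your argument is correct and is exactly the justification the paper intends: part~(i) is the definitional fact that an e-edge forces both endpoints onto the outerface boundary, and part~(ii) follows from the cited theorem that in a $2$-(edge-)connected plane graph the outerface boundary is a cycle, which uses precisely two edges at each of its vertices. The paper states this as an observation without proof, so there is nothing further to compare.
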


\begin{figwindow}[1,r,{
\unitlength1cm
 \begin{tikzpicture}[scale=1.2]
 \draw[white] (-1.8,1)--(2.75,1);
 \draw (-1,0.3)--(0,0)--(0,2)--(-1,1.7);
\draw (0,0) arc (-90:90:2cm and 1cm);
\draw (1,1)--(2,1);
\filldraw (0,0) circle(2pt);\node at (0,-.3){u};
\filldraw (0,2) circle(2pt);\node at (0,2.3){v};
\filldraw (2,1) circle(2pt);\node at (2.3,1){x};
\filldraw (1,1) circle(2pt);\node at (1.2,1.2){y};
\draw [dashed](1,1) arc (0:360:.35cm and .5cm);
\end{tikzpicture}
},{$x$ root an inflorescence.}
\label{inflorescence}]
In \cite[Figure 1]{Keith-Froncek-Kreher-1}, a special instance in planar graphs is called an inflorescence. 
We say that the vertex $x$ root an inflorescence if it must be adjacent to a vertex $y$ within a face not on the boundary of it, since the boundary vertices have already known their neighbours. $y$ must also be adjacent to some other vertices within this face. But this makes the edge $xy$ a cut-edge, and makes the vertices $x$ and $y$ cut-vertices. This is illustrated in Figure \ref{inflorescence}. We will use this result many times in drawing the planar graph. Indeed, if a vertex root an inflorescence, then the graph has a cut-vertex and so it is not a 2-connected graph.
\end{figwindow}

\begin{theorem}\label{i-vertex}
Suppose that $G$ is a $2$-connected planar graph such that all vertices are of degree $3$ except only one vertex on the boundary of its outerface that has the degree $2$ and all internal faces have the same length. Then we have:
\begin{itemize}
  \setlength\itemsep{.01em}
\item[(i)]
The length of each internal face is 3, 4 or 5.
\item[(ii)]
The boundary of the each face of $G$ is an induced cycle.
\item[(iii)]
Each e-vertex has exactly two neighbours on the outerface.
\item[(iv)]
If two internal faces have a common vertex, then they have exactly one edge and two adjacent vertices in common.
\end{itemize}
\end{theorem}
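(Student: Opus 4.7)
The plan is to prove (i)--(iv) in order: (i) by Euler's formula, (ii) by a wedge-splitting obstruction that uses $2$-edge-connectedness, and (iii)--(iv) as consequences of (ii).

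For (i), I will apply the degree sum $2m = 3(n-1) + 2 = 3n-1$ together with Euler's formula to obtain $f = (n+3)/2$. Writing $d$ for the common length of the $f-1$ internal faces and $\ell$ for the outer-face length, Proposition~\ref{2e} gives $(f-1)d + \ell = 2m$, which rearranges to $n(6-d) = 2\ell + d + 2 > 0$. Hence $d < 6$ and, together with $d \ge 3$, this gives $d \in \{3,4,5\}$.

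For (ii), $2$-edge-connectedness makes every facial boundary a cycle, so I only need to rule out chords. Suppose a face $F$ has boundary $C$ with a chord $xy$; the chord lies on the non-$F$ side of $C$ and, with $C$, carves out two sub-regions. Fix the sub-region $R$ bounded by $xy$ and the shorter arc $P$ of $C$. If $F$ is internal, part (i) forces $P$ to have length exactly $2$ with a single intermediate vertex $w$ of degree $3$ whose two arc-edges lie on $\partial R$; so $w$'s third edge must enter the interior of $R$ and split the $R$-side wedge at $w$ into two sub-wedges. A wedge-by-wedge trace of the face $F_a \subseteq R$ incident to $xy$ shows that its boundary walk must pass through $w$ twice, contradicting the theorem cited from \cite[p.~312]{West-2001} that in a $2$-edge-connected plane graph every facial boundary is a simple cycle. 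The outer-face subcase is the same whenever the chord has a short arc; when both arcs have length at least $3$, a boundary-length count forces $F_a$ to have length at least $5$, so $d \le 4$ is ruled out immediately, and for $d = 5$ the only pentagon $F_a$ is built from two third-edges of the innermost arc vertices $u_1,v_1$ meeting at a common interior vertex $x$---but then $x$'s remaining third edge can only leave $R$'s interior through $x$ itself, making $x$ a cut-vertex, contrary to $2$-connectedness.

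For (iii), once (ii) is in hand the outer cycle $C_0$ is an induced cycle, so no $i$-edge joins two e-vertices, and every e-vertex has exactly its two $C_0$-neighbours on the outer face. For (iv), suppose internal faces $F_1,F_2$ share a vertex $v$. The unique degree-$2$ vertex is incident to just one internal face, so $v$ has degree $3$, and the three wedges at $v$ sit in three distinct faces (two wedges at $v$ inside one face would force a repeated edge in its boundary walk, impossible here); hence $F_1, F_2$ share exactly one edge $e = vv'$ at $v$ and with it the two adjacent vertices $v, v'$. To exclude a second common edge $e^*$ I will argue: if $e^*$ is incident to $v$ or $v'$, the wedge analysis at that vertex forces $F_1 = F_2$; and if the endpoints $u, u'$ of $e^*$ are disjoint from $\{v, v'\}$, then $\partial F_1$ and $\partial F_2$ are induced $d$-cycles through the same four vertices, and a short enumeration of cyclic orderings of $\partial F_2$ (with at most $d \le 5$ vertices) reveals an edge of $\partial F_2$ that becomes a chord of $\partial F_1$, contradicting (ii).

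The main obstacle is part (ii), and specifically the outer-face subcase with a chord whose two arcs both have length at least $3$: for internal faces the length bound keeps the short arc at length $2$ and the wedge-splitting contradiction is immediate, whereas for the outer face one has to supplement the wedge argument with the cut-vertex (or Euler-count) obstruction inside the sub-region carved off by the chord.
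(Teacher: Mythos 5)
Your parts (i), (iii) and (iv) are in order (and essentially match the paper), but part (ii) — the crux — has genuine gaps, concentrated in the outer-face case. First, a smaller one that also affects the internal-face case: you assert the intermediate vertex $w$ of the length-$2$ arc has degree $3$, but the unique degree-$2$ vertex lies on the outer cycle and also on internal face boundaries, so it can be exactly that intermediate vertex; then there is no third edge and no repeated-vertex contradiction (the paper sidesteps this by arguing instead about the vertex \emph{enclosed} by the chord and the other arc, which is an i-vertex and hence automatically of degree $3$). Second, and more seriously, in the outer-face case with both arcs of length at least $3$ your length count only yields $\ell(F_a)\ge 4$: the cycle $\partial F_a$ contains the path $u_1pqv_1$ with $u_1\ne v_1$, but nothing prevents $u_1v_1\in E(G)$ (when the arc is long enough this edge can be the common third edge of $u_1$ and $v_1$), nor prevents $u_1$ from being the degree-$2$ vertex so that $\partial F_a$ runs along the arc; in either event $F_a$ can be a quadrilateral and $d=4$ is not ``ruled out immediately.'' Likewise your $d=5$ step only treats the configuration in which the third edges of $u_1$ and $v_1$ meet at a common interior vertex whose cut-off region is a triangle with saturated corners; if the arc between the chord's ends is longer, the sub-arc vertices between $u_1$ and $v_1$ have their own third edges pointing into that region, so that vertex need not be a cut vertex and no contradiction follows.

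The deeper issue is that no local argument around an outer-cycle chord should be expected to close this case: the chord together with the long arc and everything inside it is itself a $2$-connected plane graph in which all bounded faces have length $d$ and every vertex has degree $3$ except the two chord ends, which have degree $2$ — i.e.\ precisely the kind of configuration Theorem \ref{th3.8} (and ultimately the whole paper) is trying to exclude, so some global input is unavoidable. The paper supplies it: after discarding chords incident to the degree-$2$ vertex and chords spanning an arc of length $2$ (inflorescence/cut-vertex arguments, as in your internal case), it forms the subgraph $H$ induced by the cycle (long arc plus chord) and its interior, notes $\deg_H(x_i)=\deg_H(x_j)=2$ with all other degrees $3$ and all bounded faces of length $d\in\{3,4,5\}$, and glues two copies of $H$ along the two degree-$2$ vertices (the chord becoming a single shared edge) to manufacture a $3$-regular $2$-connected plane graph whose bounded faces all have length $d$ while its outer face has length at least $6$ — a $2$-connected $1$-NPG, contradicting Theorem \ref{th.2.3}. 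Your proposal contains no substitute for this reduction (nor, say, an induction on the size of the cut-off region), so the outer-face half of (ii) remains unproven; the internal-face half is salvageable once you handle the degree-$2$ intermediate vertex as above.
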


\begin{proof}
\textbf{(i)}
Let $n$ be the number of vertices, $m$ be the number of edges, $f$ be the number of faces, $l$ the length of external face and $d$ be the length of each internal face of $G$.
By enumerating the edges of $G$ in two ways, we have $2m=3n-2$, $2m=l+(f-1)d$ and by Euler's formula, $f-1=m-n+1=(n+1)/2$. Therefore, we have $3n-2=l+d(n+1)/2$ and so $d=6-(2l+10)/(n+1)<6$, as desired. \\
\textbf{(ii)} First, we consider the internal faces of $G$. By part (i), the length of each internal face is 3, 4 or 5. \\
\textbf{Case 1:} The lengths of internal faces are 3. Obviously, the boundary of each internal face is an induced cycle $C_3$.\\
\textbf{Case 2:} The lengths of internal faces are 4. The boundary of each internal face is a cycle $C_4$. Let $yuvwy$ be the boundary of an internal face. If $yuvwy$ is not an induced cycle, then there exist exactly one chord $yv$ or $uw$. By symmetry, we consider the chord $yv$. Now, either the vertex $u$ belongs to the interior of the triangle $ywvy$ or the vertex $w$ belongs to the interior of the triangle $yuvy$. Again by symmetry, suppose that $u$ belongs to the interior of triangle $ywvy$. 
Since $u$ is an i-vertex, $u$ root an inflorescence, a contradiction to 2-connectivity of the graph.  Therefore, $yuvwy$ is an induced cycle, as desired.\\
\textbf{Case 3:} The lengths of internal faces are 5. The boundary of each internal face is a cycle $C_5$. Let $xyuvwx$ be the boundary of an internal face. If $xyuvwx$ is not an induced cycle, then there exist two chords. By symmetry, assume that  $yv$ is a chord. Now, we consider two subcases.\\
\textbf{Subcase 1.} The vertex $u$ belongs to the interior of the square $xyvwx$. In this subcase, $u$ root an inflorescence, a contradiction.\\
\textbf{Subcase 2.} The vertices $x$ and $w$ belongs to the interior of the square $yuvy$. Since $y$  and $v$ is already of degree 3, the path $xyvw$ is the part of the boundary of a pentagonal face and so the vertices $x$ and $w$ have a common neighbour, say $z$. But the vertex $z$ root an inflorescence in the triangle $xwzx$, a contradiction.
 Therefore, $yuvwy$ is an induced cycle.  
 In this 3 cases we show that the boundaries of all internal faces are induced cycles, as desired. 
 
 Now, we consider the boundary of the outerface. Assume that the cycle $x_1x_2x_3\cdots x_lx_1$ is the boundary of the outerface such that $\deg(x_l)=2$ and the degrees of all other vertices of $G$ are $3$.  
By the contrary, suppose that $\langle\{x_1,x_2,\cdots ,x_l\}\rangle\ne C_l$. Hence there exist a chord $x_ix_j$ such that $1\le i< j\le l$ and $x_ix_j\notin\{x_ix_{i+1}:i=1,2,\cdots,l-1\}\cup\{x_1x_l\}$. Note that $j\ne l$. 
Otherwise, $x_i\in N(x_l)$ and $x_ix_l\notin\{x_{l-1}x_{l},x_1x_l\}$ and so $i\notin\{1,l-1\}$. This implies that $\deg(x_l)\ge3$, a contradiction. Also, we have $j\ne i+2$. Otherwise, we consider the cycle $x_ix_{i+1}x_{i+2}x_i$ and we see that the vertex $x_{i+1}$ root an inflorescence, a contradiction. Therefore, we have  $1\le i\le j-3\le l-4$.

Let $S$ be the set of all vertices lying on the cycle $x_ix_{i+1}x_{i+2}\cdots x_jx_i$ and its interior. The subgraph $H=\langle S \rangle$ is a $2$-connected planar graph such that $\deg(x_i)=\deg(x_j)=2$, the degrees of all other vertices of $H$ are $3$, the length of the outerface is at least $4$ and the internal faces have the same length.

We consider two copies of $H$ and we construct the new graph $H'$ by matching the vertices of degree $2$ from one copy to their respective vertices from the other copy of $H$. The graph $H'$ is a $3$-regular $2$-connected planar graph such that all internal faces have the same length equal to 3, 4 or 5, while the length of the outerface of $H'$ is at least $6$, therefore, $H'$ is a $2$-connected $1$-NPG, that contradicts the Theorem \ref{th.2.3}. Hence, the boundary of the outerface is an induced cycle.\\
\textbf{(iii)} Since the outerface is a cycle, each e-vertex has at least two neighbours on this cycle. If the third neighbour of an e-vertex lies on the outerface, then the cycle has a chord and so it is not an induced cycle, a contradiction with the part \textbf{(ii)}.
\\
\textbf{(iv)} By part (i), the length of each internal face is 3, 4 or 5.\\
\textbf{Case 1:} The lengths of internal faces are 3. Assume that two triangles $uu_1u_2u$ and $uv_1v_2u$ are two different internal faces and $u$ lies on both of them. If 
$\{u_1,u_2\}\cap\{v_1,v_2\}=\emptyset$, then $\deg(u)\ge 4$, a contradiction, and so $\{u_1,u_2\}\cap\{v_1,v_2\}\ne \emptyset$. We set $u_1=v_1$, that is, two triangles have an edge and two adjacent vertices in common. We have $u_2\ne v_2$, otherwise, two triangles are the same, a contradiction.\\
\textbf{Case 2:}  The lengths of internal faces are 4. Assume that two squares $uu_1u_2u_3u$ and $uv_1v_2v_3u$ are the boundaries of two internal faces and $u$ lies on both of them. If $\{u_1,u_3\}\cap\{v_1,v_3\}=\emptyset$, then $\deg(u)\ge 4$, a contradiction. We set $u_1=v_1$, that is, two squares have an edge and two adjacent vertices in common. We show that $\{u_2,u_3\}\cap \{v_2,v_3\}=\emptyset$. Since $u_2\notin N(u)=\{u_1,u_3,v_3\}$, we have $u_2\ne v_3$. 
If $u_2=v_2$, then the vertex $u_1$ lies inside the square $uu_3u_2v_3u$. It is an $i$=vertex with $\deg(u_1)=3$. In this case, either the graph has a chord $u_1u_3$ or $u_1v_3$, or $u_1$ root an inflorescence, a contradiction.
Similarly $u_3\notin\{v_2,v_3\}$.\\
\textbf{Case 3:} The lengths of internal faces are 5. Assume that two pentagons $uu_1u_2u_3u_4u$ and $uy_1y_2y_3y_4u$ are the boundaries of two internal faces and $u$ lies on both of them. If $\{u_1,u_4\}\cap\{v_1,v_4\}=\emptyset$, then $\deg(u)\ge 4$, a contradiction. We set $u_1=y_1$, that is, two pentagons have an edge and two adjacent vertices in common. We show that $\{u_2,u_3,u_4\}\cap\{v_2,v_3,v_4\}=\emptyset$. Since $u_2\in N(u_1)=\{u,u_2,v_2\}$, we have $u_2\notin\{v_3,v_4\}$. If $u_2=v_2$, then the vertex $u_1$ lies inside the hexagon $uu_4u_3u_2v_3v_4u$. It is an $i$=vertex with $\deg(u_1)=3$. In this case, either the graph has a chord or $u_1$ root an inflorescence, a contradiction. Similarly $u_4\notin\{v_2,v_3,v_4\}$ and $\{u_2,u_3,u_4\}\cap\{v_2,v_4\}=\emptyset$. Finally, if  $u_3=v_3$, then $\deg(u_3)=4$, a contradiction. 
\end{proof}

The parts (ii) and (iii) of the Theorem \ref{i-vertex} play an important role in the construction of graphs in the following theorem.

\begin{theorem}\label{th3.8}
There is no $2$-connected planar graph such that:
\begin{itemize}
  \setlength\itemsep{.01em}
\item[(i)]
All vertices are of degree $3\le k\le5$ except only one vertex on outerface that has the degree $k_0$ and $2\le k_0\le k-1$.
\item[(ii)]
 All internal faces have the same length.
 \end{itemize}
\end{theorem}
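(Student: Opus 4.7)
The plan is to proceed by contradiction: suppose $G$ is such a graph, with exceptional vertex $v$ of degree $k_0$, uniform internal face length $d$, and outerface length $l$. First I would prune the admissible cases using parity and Euler's formula. The handshake identity $2m = k(n-1) + k_0$ rules out $(k,k_0) = (4,3)$ immediately, since the right-hand side is then odd. For each surviving pair $(k,k_0) \in \{(3,2),(4,2),(5,2),(5,3),(5,4)\}$, combining $2m = k(n-1) + k_0 = (f-1)d + l$ with Euler's formula---exactly as in Theorem~\ref{i-vertex}(i)---yields $d \in \{3,4,5\}$ together with an explicit linear formula for $l$ in terms of $n$.

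Next I would generalize the structural results of Theorem~\ref{i-vertex}(ii)--(iv) to this setting: every face boundary is an induced cycle, each e-vertex other than $v$ has exactly two outer neighbors, and any two internal faces meeting at a vertex share exactly one common edge. The arguments transplant the local ones used in Theorem~\ref{i-vertex}: any forbidden chord either creates an interior vertex rooting an inflorescence---contradicting 2-connectivity---or cleaves off a subgraph whose mirror-image double becomes a $2$-connected $k$-regular $1$-NPG, contradicting Theorem~\ref{th.2.3}.

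The decisive step is then to manufacture, from $G$, a $2$-connected $k$-regular $1$-NPG whose existence is forbidden by Theorem~\ref{th.2.3}. My proposal is a local surgery at $v$: delete $v$ together with its $k_0$ incident edges and, when the two outerface neighbors $u_1, u_{l-1}$ of $v$ are not already adjacent, reconnect them by a chord drawn inside the former outerface. The resulting graph $G'$ is $2$-connected and planar, has a new outerface of length $l-1$ and one merged internal face of length $(k_0-1)(d-2)+1$, and is $k$-regular except at the $k_0-2$ former inner neighbors of $v$, each now of degree $k-1$. When $k_0 = 2$ no deficient vertex remains and a case analysis on $(d,l)$ exhibits $G'$ as a genuine $2$-connected $k$-regular $1$-NPG, contradicting Theorem~\ref{th.2.3}. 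When $k_0 \in \{3,4\}$ the chord is paired with a small planar gadget, attached inside the merged face, that absorbs the $k_0-2$ remaining deficiencies while contributing only length-$d$ faces.

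The main obstacle I anticipate is the construction step for $k_0 \ge 3$: the gadget must raise each deficient vertex from degree $k-1$ to $k$, remain planar and simple, contribute only length-$d$ faces, and leave exactly one face of exceptional length in the final graph. Because Euler's formula pins $n, m, f, l$ down tightly in each of the remaining cases, a finite verification should complete the proof once the gadget is specified.
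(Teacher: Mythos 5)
Your opening reduction (handshake plus Euler to force $k=3,\ d\in\{3,4,5\}$ or $k\in\{4,5\},\ d=3$, and the parity elimination of $(k,k_0)=(4,3)$) coincides with the paper's, but the decisive surgery step has a genuine gap, and it fails exactly in the cases that carry the weight. Whenever $k\in\{4,5\}$ the counting forces $d=3$; then for $k_0=2$ the unique internal face through the exceptional vertex $v$ is the triangle $u_1vu_{\ell-1}$, so the two outer neighbours $u_1,u_{\ell-1}$ are already adjacent: the chord cannot be added in a simple graph, and deleting $v$ alone leaves two vertices of degree $k-1$, so the resulting graph is not regular and Theorem \ref{th.2.3} says nothing about it. Even in the cases where the chord can be added ($k=3$, $d\in\{4,5\}$, $k_0=2$), the graph $G'$ has one merged internal face of length $d-1$ and an outerface of length $\ell-1$ alongside the faces of length $d$; unless $\ell=d+1$ this yields \emph{two} exceptional faces, i.e.\ a $2$-NPG, which is not forbidden (prisms show $2$-NPGs exist), so no contradiction follows. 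Moreover $\ell$ is not ``pinned down'': relation (\ref{f4}) ties $\ell$ linearly to $n$, and $n$ is unbounded a priori, so the promised finite verification is not available. Finally, for $k_0\in\{3,4\}$ (hence $k=5$, $d=3$) the entire burden falls on the unspecified gadget, which would have to raise one or two interior vertices from degree $4$ to degree $5$ inside a merged face of length $k_0$, adding only triangular faces, keeping planarity and simplicity, and leaving exactly one exceptional face; no candidate is exhibited, and constructing one is essentially a problem of the same difficulty as the theorem itself.

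For comparison, the paper does not modify $G$ at all after the counting step: it reduces to eight triples $(k,d,k_0)$, kills $(3,3,2)$ numerically and $(4,3,3)$ by parity, and in each remaining case grows the plane graph outward from the exceptional vertex, using the structural facts of Theorem \ref{i-vertex} (face boundaries are induced cycles, each e-vertex has exactly two neighbours on the outer cycle, two internal faces meeting at a vertex share exactly one edge) to show that the forced completion contains a vertex of wrong degree or a vertex rooting an inflorescence, contradicting $2$-connectivity. To rescue your route you would need, at minimum, a separate treatment of $d=3$ (where the chord is unavailable) and an argument forcing $\ell=d+1$ in the remaining cases, neither of which is in sight; as it stands the reduction to Theorem \ref{th.2.3} does not go through.
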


\begin{proof}
Assume to the contrary that there exists a $2$-connected planar graph, $G$, such that the length of all internal faces of $G$ is $d$ and the length of the outerface is $\ell$. The graph $G$ has a vertex $x$ on its outerface, with $\deg(x)=k_0$ and the degree of all other vertices is $k$, where $2\le k_0\le k-1$. 
Since $G$ is a $2$-connected graph, the boundary of each face is a cycle, thus the number of vertices and the number of edges lying on the outerface is equal to $\ell$ and also, $d,\ell\ge3$. By Lemma \ref{rem}, $G$ is not an outerplanar graph and so $\ell\le n-1$. \\
We have some relations between the parameters of $G$:
\begin{align}\label{f1}
2m&=(n-1)k+k_0 , \\ \label{f2}
2m&=(f-1)d+\ell
\end{align}
Now, by Euler's formula we have:
\begin{align}\label{f3}
f-1=m-n+1=\frac{1}{2}[(n-1)(k-2)+k_0]
\end{align}
Hence, by (\ref{f1}), (\ref{f2}), and (\ref{f3}) we conclude that:
\begin{align}\label{f4}
(n-1)(2k+2d-dk)=k_0(d-2)+2\ell
\end{align}
Since $k_0(d-2)+2\ell\ge8$, by equality (\ref{f4}), we see $2k+2d-dk>0$, that is, $\frac{2}{d}+\frac{2}{k}>1$ which implies that $k=3, d\in\{3,4,5\}$ or $k\in\{4,5\}, d=3$. In each case, we have $k\ge 3$ and so $n\ge4$. Thus, we have $8$ cases to check:
\\
\textbf{Case 1.} If $k=3, d=3, k_0=2$, then by (\ref{f4}), we have $3n-5=2\ell\le 2(n-1)$ and so $n\le3$, a contradiction.
\begin{figwindow}[1,r,{
\unitlength1cm
 \begin{tikzpicture}[scale=.85]
 \draw (5,0)--(7,1)--(7,4)--(5,5)--(3,4)--(3,1)--(5,0);
 \filldraw (5,5) circle (2pt);\node at (5,5.3){$x$};
    \filldraw (3,4) circle (2pt);\node at (2.7,4){$x_1$};
      \filldraw (7,4) circle (2pt);\node at (7.3,4){$y_1$};
    \draw (7,4)--(5,3)--(3,4);
   \filldraw (5,3) circle (2pt);\node at (5,3.3){$z_1$};   
     \filldraw (3,1) circle (2pt);\node at (2.7,1){$x_2$};
   \filldraw (7,1) circle (2pt);\node at (7.3,1){$y_2$};
\filldraw (5,2) circle (2pt);\node at (5,1.7){$z_2$};     
\draw (5,2)--(5,3);
\draw (3,1)--(5,2)--(7,1);
     \filldraw (5,0) circle (2pt);\node at (5,-.3){$y$};
 \end{tikzpicture}
},{$k=3, d=4, k_0=2$}]
\noindent
\textbf{Case 2.} If $k=3, d=4, k_0=2$, then $x$ has two neighbours. Let $N(x)=\{x_1,y_1\}$. By Theorem \ref{i-vertex}(iii), the edges $xx_1$ and $xy_1$ are e-edges. They lie on an internal face, namely, the square $xx_1z_1y_1x$. If the outerface is incident to $z_1$, then $G=C_4$, a contradiction. 
Hence, the vertex $z_1$ is an i-vertex and the edges $x_1z_1$ and $y_1z_1$ are i-edges.
$x_1z_1$ is belonging to another squares, say  $x_1z_1z_2x_2x_1$. The vertex $z_1$ has all 3 neighbours and so, the path $y_1z_1z_2$ is a part of the new square  $y_1z_1z_2y_2y_1$. 
$x_2$ is the third neighbour of  $x_1$ and $y_2$ is the third neighbour of  $y_1$ and so they are e-vertices. 
We claim that $z_2$ does not lie on the outerface. If $z_2$ lies on the outerface, then the graph has no other vertex, but it has three vertices of degree $2$, a contradiction. Since $z_2$ is an i-vertex with degree $3$, the path $x_2z_2y_2$ is a part of a square, say $x_2z_2y_2yx_2$. Now, $y$ is the third neighbour of $x_2$ and $y_2$. $y$ necessarily lies on the outerface and the graph has no other vertex, but it has two vertices of degree $2$, a contradiction.
\end{figwindow}

\begin{figwindow}[7,r,{
 \begin{tikzpicture}[scale=1.2]
 \draw (4.6,3.7)--(4.3,4.5)--(5,5)--(5.7,4.5)--(5.4,3.7)--(4.6,3.7);
 \filldraw (5,5) circle (2pt);\node at (5,5.3){$x$};
  \filldraw (4.3,4.5) circle (2pt);\node at (4,4.5){$x_1$};
   \filldraw (5.7,4.5) circle (2pt);\node at (6,4.5){$y_1$};
\filldraw (4.6,3.7) circle (2pt);\node at (4.7,3.9){$z_1$};   
\filldraw (5.4,3.7) circle (2pt);\node at (5.3,3.9){$z_2$};   
\draw (4.3,4.5)--(3.5,2.5)--(4.1,2.5)--(4.6,3)--(4.6,3.7);
\draw (5.7,4.5)--(6.5,2.5)--(5.9,2.5)--(5.4,3)--(5.4,3.7);
\filldraw (3.5,2.5) circle (2pt);\node at (3.2,2.5){$x_2$};           
\filldraw (6.5,2.5) circle (2pt);\node at (6.8,2.5){$y_2$};
\filldraw (4.1,2.5) circle (2pt);\node at (4.4,2.5){$z_6$};     
\filldraw (4.6,3) circle (2pt);\node at (4.3,3){$z_3$};
\filldraw (5.4,3) circle (2pt);\node at (5.7,3){$z_4$};
\filldraw (5.9,2.5) circle (2pt);\node at (5.6,2.5){$z_7$};  
\filldraw (5,2.7) circle (2pt);\node at (5,2.9){$z_5$}; 
\filldraw (5,2.3) circle (2pt);\node at (5,2.1){$z_8$};   
\draw (4.6,3)--(5,2.7)--(5.4,3);
\draw (5,2.7)--(5,2.3);
\draw (4.1,2.5)--(4.6,2)--(5,2.3)--(5.4,2)--(5.9,2.5);
\filldraw (4.6,2) circle (2pt);\node at (4.6,2.2){$z_9$}; 
\filldraw (5.4,2) circle (2pt);\node at (5.4,2.2){$z_{10}$};  
\draw (3.5,2.5)--(4.3,0.5)--(4.6,1.3)--(4.6,2);
\draw (6.5,2.5)--(5.7,0.5)--(5.4,1.3)--(5.4,2);
\filldraw (4.6,1.3) circle (2pt);\node at (4.8,1.1){$z_{11}$}; 
\filldraw (5.4,1.3) circle (2pt);\node at (5.3,1.1){$z_{12}$};  
\filldraw (4.3,0.5) circle (2pt);\node at (4,.5){$x_3$}; 
\filldraw (5.7,.5) circle (2pt);\node at (6,.5){$y_3$}; 
\draw (4.6,1.3)--(5.4,1.3);
\draw (4.3,0.5)--(5,0)--(5.7,.5);
\filldraw (5,0) circle (2pt);\node at (5,-.3){$y$}; 
 \end{tikzpicture}
},{$k=3, d=5, k_0=2$}]
\noindent
\textbf{Case 3.} If $k=3, d=5, k_0=2$, then $x$ has two neighbours. Let $N(x)=\{x_1,y_1\}$. By Theorem \ref{i-vertex}(iii), the edges $xx_1$ and $xy_1$ are e-edges. They lie on an internal face, the pentagon $y_1xx_1z_1z_2y_1$.  If $z_1$ be an e-vertex, then $\deg(x_1)=2$, a contradiction. Hence, $z_1$ is an i-vertex and $x_1z_1$ is an i-edge. Similarly, $z_2$ is an i-vertex and $y_1z_2$ is an i-edge. Also, $z_1z_2$ is an i-edge and so it lies on the second pentagonal face $z_1z_2z_4z_5z_3z_1$. By Theorem \ref{i-vertex}(iv), we have $\{z_3,z_4,z_5\}\cap \{x,x_1,y_1\}=\emptyset$. 
$x_1z_1$ lies on the boundary of another pentagon, say $x_1z_1z_3z_6x_2x_1$. By Theorem \ref{i-vertex}(iv), we have $\{x_2,z_3,z_6\}\cap \{x,y_1,z_2,z_4,z_5\}=\emptyset$. 
The i-edge $y_1z_2$ lies on the second pentagon, $y_1z_2z_4z_7y_2y_1$ and $\{y_2,z_7\}\cap \{x,x_1,z_1,z_3,z_5\}=\emptyset$. 
Since $x_2$ and $y_2$ are the third neighbours of $x_2$ and $y_2$, respectively, the edges  $x_1x_2$ and $y_1y_2$ are e-vertices and so $x_2$ and $y_2$ are e-vertices. 
The vertices $z_6$ and $z_7$ are i-vertices. Otherwise, $\deg(x_2)=2$ or $\deg(y_2)=2$, a contradiction.
By Theorem \ref{i-vertex}(iv), $z_6=z_7$ if and only if $x_2=y_2$. If $z_6=z_7$, then By Theorem \ref{i-vertex}(iv), $x_2=y_2$ and so the cycle $xx_1x_2y_1x$ is the boundary of outerface and graph is completed but,  $z_5$ root an inflorescence, a contradiction. 
Consequently, $z_6\ne z_7$ and $x_2\ne y_2$.
Also, $z_6\ne y_2$, otherwise, we have $\deg(y_2)=4$, a contradiction. Similarly, $z_7\ne x_2$. 
The i-edge $z_3z_6$ lies on the second pentagon $z_3z_6z_9z_8z_5z_3$. By Theorem \ref{i-vertex}(iv), $\{x_2\}\cap\{z_8,z_9\}=\emptyset$.  
By Theorem \ref{i-vertex}(iv), $z_7=z_8$ if and only if $z_9=y_2$. In this case, the graph  has a triangular internal face $z_4z_5z_7z_4$, a contradiction and Again, by Theorem \ref{i-vertex}(iv), $z_8=y_2$ if and only if $z_7=z_9$. This case contradicts the planarity of the graph. Therefore, we have $\{y_2,z_7\}\cap\{z_8,z_9\}=\emptyset$.
The i-edge $z_4z_7$ lies on the second pentagon $z_4z_5z_8z_{10}z_7z_4$. By Theorem \ref{i-vertex}(iv), $z_{10}\notin\{y_2,z_9\}$ and $x_2\ne z_{10}$, otherwise, $\deg(x_2)=4$, a contradiction.
 The i-edge $x_2z_6$ lies on the second pentagon $x_2z_6z_9z_{11}x_3x_2$. If $z_{11}=z_{10}$ we have a triangular internal face $z_8z_9z_{10}$, a contradiction. If $z_{11}=y_2$ then we have $\deg(y_2)=4$, a contradiction. If $x_3=z_{10}$ or $x_3=y_2$, then we have $\deg(z_{10})=4$ or $\deg(y_2)=4$, respectively, a contradiction. Thus, $\{x_3,z_{11}\}\cap \{y_2,z_{10}\}=\emptyset$.
   Furthermore, $x_3$ is the third neighbour of $x_2$ and so it is an e-vertex. $z_{11}$ is an i-vertex  and $x_3z_{11}$ is an i-edge. Otherwise, $\deg(x_3)=2$, a contradiction. Since $z_9z_{11}$ is an i-edge, it lies on the second pentagon $z_9z_{11}z_{12}z_{10}z_8z_9$. 
 If $z_{12}=x_3$, then $z_{11}$ root an inflorescence and if $z_{12}=y_2$, then $\deg(y_2)=4$, a contradiction. Thus, $\{z_{12}\}\cap \{x_3,y_2\}=\emptyset$.
  The edge $y_2z_7$ is an i-edge and so it lies on the second pentagon $y_2z_7z_{10}z_{12}y_3y_2$. If $x_3=y_3$, then $\deg(x_3)=4$, a contradiction. Furthermore, since $y_3$ is the third neighbour of $y_2$, it is an e-vertex. The i-edge $x_3z_{11}$ has to lie on the second pentagon $x_3z_{11}z_{12}y_3yx_3$. The vertex $y$ is the third neighbour of $y_3$ and so it is an e-vertex and the cycle $xx_1x_2x_3yy_3y_2y_1x$ is the boundary of outerface of the graph $G$. The graph $G$ is completed while $\deg(y)=2$ or $y$ root an inflorescence, a contradiction.
\end{figwindow}

\begin{figwindow}[4,r,{
\begin{tikzpicture}[scale=.95]
\draw[white] (2.5,2)--(7.5,2);
  \draw (6.65,2)--(6.65,4)--(5,5)--(3.35,4)--(3.35,2)--(5,3)
  --(6.65,2)--(5,1)--(3.35,2)--(6.65,2);
  \draw (3.35,4)--(6.65,4)--(5,3)--(3.35,4);
 \filldraw (5,5) circle (2pt);\node at (5,5.3){$x$};
  \filldraw (3.35,4) circle (2pt);\node at (3,4){$x_1$};
   \filldraw (6.65,4) circle (2pt);\node at (7,4){$y_1$};
\filldraw (5,3) circle (2pt);\node at (5,2.7){$z$};   
\filldraw (3.35,2) circle (2pt);\node at (3,2){$x_2$};           
\filldraw (6.65,2) circle (2pt);\node at (7,2){$y_2$};
\filldraw (5,1) circle (2pt);\node at (5,.7){$y$}; 
\end{tikzpicture}
},{$k=4, d=3, k_0=2$}]
\noindent
\textbf{Case 4.} If $k=4, d=3, k_0=2$, then $x$ has two neighbours. Let $N(x)=\{x_1,y_1\}$. By Theorem \ref{i-vertex}(iii), the edges $xx_1$ and $xy_1$ are e-edges. The second face consisting the edge $xx_1$ is the triangle  $xx_1y_1x$. The edge $x_1y_1$ is an i-edges. Otherwise, $G=C_3$, that has three vertices of degree $2$, a contradiction. The second face consisting the edge $x_1y_1$ is the triangle  $x_1y_1zx_1$. If $z$ be an e-vertex, then $G$ has no other vertex and so $G$ has two vertices of degree $2$, a contradiction. Therefore, $z$ is an i-vertex and consequently the edges $x_1z$ and $y_1z$ are i-edges. The second face consisting the edge $x_1z$ is the triangle  $x_1zx_2x_1$ and the second face consisting the edge $y_1z$ is the triangle  $y_1zy_2y_1$. If $x_2=y_2$, then $\deg(z)=3\ne k$ or $z$ root an inflorescence, a contradiction. Therefore, $x_2$ and $y_2$ are distinct vertices. Since $x_2$ and $y_2$ are the fourth neighbours of $x_1$ and $y_1$, respectively, they are e-vertices. Now, $\deg(z)=4$ and so the second face consisting the i-edge $x_2z$ is the triangle  $x_2zy_2x_2$. 
The edge $x_2y_2$ is an i-edge, otherwise, $\deg(x_2)=\deg(y_2)=3<k$, a contradiction. Let $x_2y_2yx_2$ be the second triangle incident to $x_2y_2$. 
Now, the edges $x_2y$ and $y_2y$ are the fourth edges passing through $x_2$ and $y_2$, respectively, and so they are the second e-edges passing through $x_2$ and $y_2$. That is, $y$ is an e-vertex and the graph $G$ is completed while $\deg(y)=2$ or $y$ root an inflorescence, a contradiction.
\end{figwindow} 
\noindent
\textbf{Case 5.} If $k=4, d=3, k_0=3$, then $x$ is the only vertex by an odd degree while other vertices have an even degree, it is impossible.

\begin{figwindow}[2,r,{
\begin{tikzpicture}[scale=1]
\draw (0,0)--(3,2)--(0,6)--(-3,2)--(0,0);
\draw (0,0)--(0,1)--(3,2)--(.75,3.7)--(0,4.3)--(-.75,3.7)--(-3,2)--(0,1);
\draw (.75,5)--(.75,2)--(0,1)--(-.75,2)--(-.75,5);
\draw (0,2.85)--(0,4.3)--(.75,5)--(-.75,5)--(0,4.3);
\draw (-3,2)--(3,2);
\draw (.75,3.7)--(-.75,2);\draw (-.75,3.7)--(.75,2);
 \filldraw (0,6) circle (2pt);\node at (0,6.3){$x$};
\filldraw (-.75,5) circle (2pt);\node at (-1.1,5){$x_1$};
\filldraw (-3,2) circle (2pt);\node at (-3.3,2){$x_2$};
\filldraw (.75,5) circle (2pt);\node at (1.1,5){$y_1$};
\filldraw (3,2) circle (2pt);\node at (3.3,2){$y_2$};
\filldraw (0,0) circle (2pt);\node at (0,-.3){$y$};
\filldraw (0,4.3) circle (2pt);\node at (0,4.6){$z_1$};  
\filldraw (-.75,3.7) circle (2pt);\node at (-1,3.8){$z_2$};  
\filldraw (.75,3.7) circle (2pt);\node at (1,3.8){$z_3$}; 
\filldraw (0,2.85) circle (2pt);\node at (0,2.5){$z_4$};
\filldraw (-.75,2) circle (2pt);\node at (-.75,1.7){$z_5$};  
\filldraw (.75,2) circle (2pt);\node at (.75,1.7){$z_6$};  
\filldraw (0,1) circle (2pt);\node at (0,1.3){$z_7$}; 
\end{tikzpicture}
},{$k=5, d=3, k_0=2$}]
\noindent
\textbf{Case 6.} If $k=5, d=3, k_0=2$, then $x$ has two neighbours. Let $N(x)=\{x_1,y_1\}$. By Theorem \ref{i-vertex}(iii), the edges $xx_1$ and $xy_1$ are e-edges. The second face consisting the edge $xx_1$ is the triangle  $xx_1y_1x$. The edge $x_1y_1$ is an i-edges. Otherwise, $G=C_3$, that has three vertices of degree $2$, a contradiction. 
The second face consisting the edge $x_1y_1$ is the triangle  $x_1y_1z_1x_1$. If $z_1$ be an e-vertices, then $G$ has no other vertices and so $G$ has two vertices of degree $2$, a contradiction. Therefore, $z_1$ is an i-vertex and consequently the edges $x_1z_1$ and $y_1z_1$ are i-edges. The second face consisting the edge $x_1z_1$ is the triangle  $x_1z_1z_2x_1$ and the second face consisting the edge $y_1z_1$ is the triangle  $y_1z_1z_3y_1$. Note that if $z_2=z_3$, then $\deg(z_1)=3$ which is a contradiction. Hence, $z_2\ne z_3$.  
We call the fifth neighbour of $z_1$ as $z_4$ and so the i-edge $z_1z_4$ has to lie on two triangle $z_1z_2z_4z_1$ and $z_1z_3z_4z_1$. 
The vertices  $z_2$ and $z_3$ are $i$-vertices, otherwise, we have $\deg(x_1)=4<k$ or $\deg(y_1)=4<k$, respectively, a contradiction.
The i-edge $z_2z_4$ lies on the second triangle $z_2z_4z_5z_2$. If $z_5=x_1$, then $\deg(z_2)=3$ and if $z_5=z_3$, then $\deg(z_4)=3$ and if $z_5=y_1$, then $\deg(y_1)=6$, a contradiction. Hence, we have $z_5\notin\{x_1,y_1,z_3\}$.
 Similarly,  the i-edge $z_3z_4$ lies on the second triangle $z_3z_4z_6z_3$ and $z_5\ne z_6$. Otherwise, we have $\deg(z_4)=4$, a contradiction.
The second face incident with the $i$-edge $x_1z_2$ is the triangle  $x_1z_2x_2x_1$ and the second face incident with the $i$-edge $y_1z_3$ is the triangle  $y_1z_3y_2y_1$. 
The vertices $x_2$ and $y_2$ are the fifth neighbours of $x_1$ and $y_1$, respectively, and so they are e-vertices and the edges $x_1x_2$ and $y_1y_2$ are e-edges.
 The vertices $z_2$ and $z_3$ are i-vertices and $\deg(z_2)=\deg(z_3)=5$, therefore, the i-edges $z_2x_2$ and $z_3y_2$ lie on the triangles $z_2x_2z_5z_2$ and $z_3y_2z_6z_3$, respectively. Note that $x_2\ne y_2$, otherwise, we have $\deg(x_2)=6$, a contradiction. 
The vertex $z_5$ is an i-vertex, otherwise, $\deg(x_2)=3<k$, a contradiction. Hence, The edge $z_4z_5$ is an i-edge and so it lies on the second triangle $z_4z_5z_6z_4$. Now, the i-edge $z_5z_6$ lies on the second triangle $z_5z_6z_7z_5$. We have $z_7\notin\{x_2,y_2\}$, otherwise, if $z_7=x_2$ or $z_7=y_2$ then $\deg(z_5)=4$ or $\deg(z_6)=4$, respectively, a contradiction. Since $\deg(z_5)=\deg(z_6)=5$, the i-edges $x_2z_5$ and $y_2z_6$ lie on the triangles $x_2z_5z_7x_2$ and $y_2z_6z_7y_2$, respectively.
Note that $z_7$ is an i-vertex. Otherwise, $\deg(x_2)=4$, a contradiction.
We call the fifth neighbour of $z_7$ as $y$ and so the i-edges $x_2z_7$ and $y_2z_7$ lie on two triangles $x_2z_7yx_2$ and $y_2z_7yy_2$.
Since $y$ is the fifth neighbour of $x_2$ and $y_2$, the edges $y_2y$ and $x_2y$ are e-edges and so the graph is completed, but it has a vertex of degree 3, a contradiction.
\end{figwindow}

\begin{figwindow}[4,r,{
\begin{tikzpicture}[scale=1,rotate=180]
\draw (0,0)--(3,2)--(0,6)--(-3,2)--(0,0);
\draw (0,0)--(0,1)--(3,2)--(.75,3.7)--(0,4.3)--(-.75,3.7)--(-3,2)--(0,1);
\draw (.75,5)--(.75,2)--(0,1)--(-.75,2)--(-.75,5);
\draw (0,2.85)--(0,4.3)--(.75,5)--(-.75,5)--(0,4.3);
\draw (-3,2)--(3,2);
\draw (.75,3.7)--(-.75,2);\draw (-.75,3.7)--(.75,2);
 \filldraw (0,6) circle (2pt);\node at (0,6.3){$y$};
\filldraw (-.75,5) circle (2pt);\node at (-1.1,5){$y_2$};
\filldraw (-3,2) circle (2pt);\node at (-3.3,2){$y_1$};
\filldraw (.75,5) circle (2pt);\node at (1.1,5){$x_2$};
\filldraw (3,2) circle (2pt);\node at (3.3,2){$x_1$};
\filldraw (0,0) circle (2pt);\node at (0,-.3){$x$};
\filldraw (0,4.3) circle (2pt);\node at (0,4.6){$z_7$};  
\filldraw (-.75,3.7) circle (2pt);\node at (-1,3.8){$z_6$};  
\filldraw (.75,3.7) circle (2pt);\node at (1,3.8){$z_5$}; 
\filldraw (0,2.85) circle (2pt);\node at (0,2.5){$z_4$};
\filldraw (-.75,2) circle (2pt);\node at (-.75,1.7){$z_3$};  
\filldraw (.75,2) circle (2pt);\node at (.75,1.7){$z_2$};  
\filldraw (0,1) circle (2pt);\node at (0,1.3){$z_1$}; 
\end{tikzpicture}
},{$k=5, d=3, k_0=3$}]
\noindent
\textbf{Case 7.} If $k=5, d=3, k_0=3$, then $x$ has three neighbours. Let $N(x)=\{x_1,z_1,y_1\}$. By Theorem \ref{i-vertex}(iii), we choose the vertex $z_1$ as an i-vertex and other two neighbours of $x$ as e-vertices. The i-edge $xz_1$ lies on the two triangles $xz_1x_1x$ and $xz_1y_1x$. The edge $x_1z_1$ is an i-edges. The second face consisting the edge $x_1z_1$ is the triangle $x_1z_1z_2x_1$. 
Note that $z_2\ne y_1$, otherwise, $\deg(z_1)=3$, a contradiction.
Similarly, the edge $z_1y_1$ is an i-edges and the second triangle consisting the edge $z_1y_1$ is the triangle  $z_1y_1z_3z_1$. 
The vertices $z_2$ and $z_3$ are i-vertices. Otherwise, $\deg(x_1)=3<k$ or $\deg(y_1)=3<k$, a contradiction. 
Also, we have $z_2\ne z_3$, otherwise, $\deg(z_1)=4$, a contradiction.
The second triangle incident with the i-edge $z_1z_2$ is the triangle  $z_1z_2z_3z_1$ and the second triangle incident with the i-edge $z_2z_3$ is the triangle  $z_2z_3z_4z_2$. 
Note that $z_4\notin\{x_1,y_1\}$, otherwise, $\deg(z_2)=3$ or $\deg(z_3)=3$, a contradiction. 
The i-edge $x_1z_2$ lies on the second triangle $x_1z_2z_5x_1$. If $z_5=z_4$, then $\deg(z_2)=4$, a contradiction, and so we have $z_5\ne z_4$ and the i-edge $z_2z_5$ lies on the second triangle $z_2z_5z_4z_2$.
 Note that the vertex $z_5$ is the fifth neighbour of $z_2$ and $z_5\ne y_1$, otherwise, $\deg(y_1)=6$, a contradiction.
  Similarly, the i-edge $y_1z_3$ lies on the second triangle $y_1z_3z_6y_1$ and the i-edge $z_3z_6$ lies on the second triangle $z_3z_4z_6z_3$. 
 We know that $z_5$ and $z_6$ are i-vertices, otherwise, $\deg(x_1)=4$ or $\deg(y_1)=4$, a contradiction. 
The i-edge $z_4z_5$ lies on the second triangle $z_4z_5z_7z_4$. Note that $z_6\ne z_7$, otherwise, $\deg(z_4)=4$, a contradiction.
 Now, the i-edge $z_4z_6$ lies on the second triangle $z_4z_6z_7z_4$. 
 The vertices $z_5$ and $z_6$ are i-vertices. Otherwise, $\deg(x_1)=4$ or $\deg(y_1)=4$, a contradiction.
The i-edge $x_1z_5$ lies on the second triangle $x_1z_5x_2x_1$ and  the second triangle consisting the i-edge $x_2z_5$ is the triangle  $x_2z_5z_7x_2$. 
Similarly, the i-edge $y_1z_6$ lies on the second triangle $y_1z_6y_2y_1$ and  the second triangle consisting the i-edge $y_2z_6$ is the triangle  $y_2z_6z_7y_2$.
 If $x_2=y_2$, then $\deg(z_7)=4$, a contradiction, and so $x_2\ne y_2$. Since the vertices $x_2$ and $y_2$ are the fifth neighbours of $x_1$ and $y_1$, respectively, they are e-vertices and $x_1x_2$ and $y_1y_2$ are e-edges.
 The edge $x_2z_7$ is an i-edge, otherwise, $\deg(x_2)=3<k$, a contradiction.  
 Since the vertex $z_7$ is already of degree 5, the second triangle consisting $x_2z_7$ is $x_2z_7y_2x_2$. Now, the edge $x_2y_2$ is an i-edge, otherwise, $\deg(x_2)=4<k$, a contradiction. The second triangle consisting $x_2y_2$ is $x_2y_2yx_2$. The vertex $y$ is the fifth neighbour of $x_2$ and $y_2$ and so it is an e-vertex and the edges $x_2y$ and $y_2y$ are e-edges. The graph is completed but it has a vertex of degree 2, a contradiction.
\end{figwindow}

\begin{figwindow}[12,r,{
\begin{tikzpicture}[scale=1.2]
\draw (0,2)--(2,0)--(0,-2)--(-2,0)--(0,2);
\draw (0,2)--(.5,1)--(1,0)--(.5,-1)--(0,-2)--(-1,0)--(0,2);
\draw (-2,0)--(-.5,1)--(.5,1)--(2,0)--(.5,-1)--(-.5,-1)--(-2,0);
\draw (0,.5)--(0,-.5);
\draw (-2,0)--(-1,0);\draw (2,0)--(1,0);
 \draw (-.5,-1)--(0,-.5)--(1,0)--(0,.5)--(-.5,1);
 \draw (.5,-1)--(0,-.5)--(-1,0)--(0,.5)--(.5,1);
\filldraw (0,2) circle (2pt);\node at (0,2.3){$x$};
\filldraw (-2,0) circle (2pt);\node at (-2.3,0){$x_1$};
\filldraw (0,-2) circle (2pt);\node at (0,-2.3){$y$};
\filldraw (2,0) circle (2pt);\node at (2.3,0){$y_1$};
\filldraw (-.5,1) circle (2pt);\node at (-.7,1.1){$z_1$};  
\filldraw (.5,1) circle (2pt);\node at (.7,1.1){$z_2$};  
\filldraw (0,.5) circle (2pt);\node at (0,.7){$z_3$};  
\filldraw (-1,0) circle (2pt);\node at (-.7,0){$z_4$}; 
\filldraw (1,0) circle (2pt);\node at (.7,0){$z_5$}; 
\filldraw (0,-.5) circle (2pt);\node at (0,-.7){$z_6$};
 \filldraw (-.5,-1) circle (2pt);\node at (-.7,-1.1){$z_7$};
\filldraw (.5,-1) circle (2pt);\node at (.7,-1.1){$z_8$};  
\end{tikzpicture}
},{$k=5, d=3, k_0=4$}]
\noindent
\textbf{Case 8.} If $k=5, d=3, k_0=4$, then $x$ has four neighbours. Let $N(x)=\{x_1,z_1,z_2,y_1\}$. By Theorem \ref{i-vertex}(iii), we choose the vertices $z_1$ and $z_2$ as  i-vertices and other two neighbours of $x$ as e-vertices. The e-edge $xx_1$ lies on a triangular face $xx_1wx$. If $w=y_1$ then we have $\deg(x)=2$, a contradiction. Hence, $w\in\{z_1,z_2\}$. We say that $xz_1x_1x$ is the triangular face incident to $xx_1$. Similarly, the e-edge $yy_1$ lies on a triangular face $yy_1wx$. If $w=z_1$ then we have $\deg(x)=3$, a contradiction. Hence, $w=z_2$ and $yz_2y_1y$ is the triangular face incident to $yy_1$. 
The triangle $xz_1z_2x$ is the second triangle incident to the i-edge $xz_1$. The edge $z_1z_2$ is an i-edge and lies on the second triangle $z_1z_2z_3z_1$. We have $z_3\notin\{x_1,y_1\}$. For example, if $z_3=x_1$, then $\deg(z_1)=3<k$, a contradiction.
The i-edge $x_1z_1$ lies on the second triangle $x_1z_1z_4x_1$ and $z_4\ne z_3$, otherwise, $\deg(z_1)=4$, a contradiction. 
Since $z_4$ is the fifth neighbour of $z_1$, the triangle $z_1z_3z_4z_1$ is the second triangle incident to the i-edge $z_1z_3$. Similarly, 
the i-edge $y_1z_2$ lies on the second triangle $y_1z_2z_5y_1$ and $z_5\ne z_3$. Also, $z_2z_3z_5z_2$ is the second triangle incident with the i-edge $z_2z_3$.
 If $z_4=z_5$, then $\deg(z_3)=3$, a contradiction and if $z_4=y_1$ or $z_5=x_1$, then $\deg(y_1)=6$ or $\deg(x_1)=6$, respectively, a contradiction. The vertices $z_4$ and $z_5$ are i-vertices. Otherwise, $\deg(x_1)=3$ or $\deg(y_1)=3$, a contradiction. The i-edge $z_4z_3$ lies on the second triangle $z_3z_4z_6z_3$. 
We have $z_6\ne z_5$. Otherwise, $\deg(z_3)=4$, a contradiction. $z_6$  is the fifth neighbour of $z_3$ and so the i-edge $z_3z_5$ lies on the second triangle $z_3z_5z_6z_3$.
Now we notice that $z_6\notin\{x_1,y_1\}$. Because, if $z_6=x_1$, then $\deg(z_4)=3$, a contradiction and  if $z_6=y_1$, then $\deg(z_5)=3$, a contradiction. 
The i-edge $x_1z_4$ lies on the second triangle $x_1z_4z_7x_1$ and $z_7\ne z_6$. Otherwise, $\deg(z_4)=4$, a contradiction. Now, the triangle $z_4z_6z_7z_4$ is the second triangle incident to the i-edge $z_4z_7$. The vertex $z_7$ is an i-vertex. Otherwise, we have $\deg(x_1)=4$, a contradiction. 
Similarly, the i-edge $y_1z_5$ lies on the second triangle $y_1z_5z_8y_1$ and the triangle $z_5z_6z_8z_5$ is the second triangle incident to the i-edge $z_5z_8$. 
Also, the vertex $z_8$ is an i-vertex. Furthermore, we have $z_7\ne z_8$, otherwise, $\deg(z_6)=4$, a contradiction.
Since we know all five neighbours of $z_6$, the i-edge $z_6z_7$ lies on the second triangle $z_6z_7z_8z_6$. Now, the i-edge $z_7z_8$ lies on the second triangle $z_7z_8yz_7$. We have $y\notin\{x_1,y_1\}$. Otherwise, $\deg(z_7)=4$ or $\deg(z_8)=4$, a contradiction. Finally, The second triangles incident to the i- edges $x_1z_7$ and $y_1z_8$ are $x_1z_7yx_1$ and $y_1z_8yy_1$, respectively. The vertex $y$ is the fifth neighbour of both e-vertices $x_1$ and $y_1$, Hence, $y$ is an e-vertex and $x_1y$ and $y_1y$ are e-edges. Now, the graph is completed but it has two vertices of degree 4, a contradiction.
\end{figwindow} 
\end{proof}

\begin{lemma}\label{d1<6} If a $(k;d_1^{f-1}d_2^{1})$-graph be a $k$-regular $1$-NPG, then $3\le d_1\le5$ for $k=3$ and $d_1=3$ for $k=4,5$.
\end{lemma}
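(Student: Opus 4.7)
The plan is to convert the regularity and near-Platonic face structure into a single Diophantine equation relating $n$, $k$, $d_1$, $d_2$ via Euler's formula, then extract the desired bound from a positivity argument.

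First I would use the handshaking lemma to write $2m = kn$. Next, by Proposition \ref{2e}, counting edge-face incidences over the $f-1$ faces of length $d_1$ and the single exceptional face of length $d_2$ gives $2m = (f-1)d_1 + d_2$. Euler's formula yields $f - 1 = m - n + 1 = (k-2)n/2 + 1$. Substituting and clearing denominators, I obtain the key identity
\[
n\bigl[2k - (k-2)d_1\bigr] = 2(d_1 + d_2).
\]

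Since $d_1, d_2 \ge 3$, the right-hand side is strictly positive, and since $n \ge 4$, the bracketed factor on the left must be strictly positive as well. Hence $(k-2)d_1 < 2k$, that is, $d_1 < \frac{2k}{k-2}$. Specializing to $k=3,4,5$ yields $d_1 < 6$, $d_1 < 4$, and $d_1 < 10/3$ respectively, which combined with the integrality constraint $d_1 \ge 3$ gives precisely the stated bounds.

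There is no substantive obstacle here; this is a pure counting exercise. The only point requiring care is that Proposition \ref{2e} correctly attributes length $d_2$ to the exceptional face regardless of whether its boundary is a simple cycle or a closed walk traversing cut-edges twice, so the edge-face identity $2m = (f-1)d_1 + d_2$ remains valid without any $2$-connectivity hypothesis --- which is essential, since the whole purpose of the paper is to dispense with that hypothesis.
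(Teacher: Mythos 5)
Your proposal is correct and follows essentially the same route as the paper: handshaking, Proposition \ref{2e}, and Euler's formula combine to give $d_1<\frac{2k}{k-2}$, from which the stated bounds follow by integrality. The only cosmetic difference is that you extract the bound from positivity of the factor $2k-(k-2)d_1$ in the cleared identity, while the paper solves for $d_1$ explicitly and bounds the resulting fraction; both are the same computation.
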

\begin{proof}
The graph is $k$-regular and so we have $kn=2m$. By Proposition \ref{2e} we have $2m=d_1(f-1)+d_2$ and by Euler's formula $f-1=m-n+1=\frac{1}{2}(k-2)n+1$. Therefore, $kn=\frac{1}{2}(k-2)nd_1+d_1+d_2$ or $d_1=\dfrac{2kn}{(k-2)n+2}-\dfrac{2d_2}{(k-2)n+2}$ which implies that $d_1<\frac{2k}{k-2}$. Now, if $k=3$, then $3\le d_1\le5$ and if $k\in\{4,5\}$, then $ d_1=3$.
\end{proof}

In \cite{Diestel}, it is proved that:    

\begin{lemma} \cite[Lemma 4.2.1]{Diestel} Let $G$ be a plane graph, $F$ a face, and $H$ a subgraph of  $G$.
\begin{itemize}
  \setlength\itemsep{.01em}
\item[(i)]
 $H$ has a face $F'$ containing $F$.
\item[(i)]
 If the frontier of $F$ lies in $H$, then $F'=F$.
 \end{itemize}
\end{lemma}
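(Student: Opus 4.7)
The plan is to argue topologically, viewing the plane graph $G$ (and its subgraph $H$) as a compact subset of $\mathbb{R}^2$ via its embedding, so that the combinatorial faces are precisely the connected components of the plane complement. Under this identification the inclusion of subgraphs reverses to give an inclusion of complements, which makes (i) nearly automatic and reduces (ii) to a separation argument using the frontier.

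For part (i) I would first observe that since $H$ is a subgraph of the plane graph $G$, we have $H\subseteq G$ as point sets in $\mathbb{R}^2$; hence $\mathbb{R}^2\setminus G \subseteq \mathbb{R}^2\setminus H$. The face $F$ of $G$ is a connected component of $\mathbb{R}^2\setminus G$, so in particular $F$ is a connected subset of $\mathbb{R}^2\setminus H$. Every connected subset of an open set lies in exactly one of its connected components, so $F$ is contained in a unique face $F'$ of $H$. This is the containment claimed in (i).

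For part (ii) I would argue by contradiction. Suppose $\partial F\subseteq H$ but $F'\ne F$. Then since $F\subseteq F'$, there is some point $p\in F'\setminus F$. Choose $q\in F$ and use that $F'$, as an open connected set in the plane, is path-connected to pick a path $\gamma\colon[0,1]\to F'$ from $q$ to $p$. The set $t^*=\sup\{t\in[0,1]:\gamma([0,t])\subseteq F\}$ is well defined, and $\gamma(t^*)$ lies in the frontier $\partial F$ (since $\gamma$ leaves the open set $F$ at $t^*$). But $\partial F\subseteq H$, whereas $\gamma(t^*)\in F'\subseteq \mathbb{R}^2\setminus H$, a contradiction. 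Hence $F'=F$.

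The main obstacle I expect is not the high-level reasoning, which is short, but making sure the boundary/frontier terminology matches up cleanly with the combinatorial boundary $\partial(F)$ used elsewhere in the paper (a closed walk of vertices and edges). To close this gap I would invoke the standard fact that, for a plane graph, the topological frontier of a face coincides with the union of the vertices and edges on its combinatorial facial boundary, so the hypothesis ``the frontier of $F$ lies in $H$'' is exactly the statement that all vertices and edges of $\partial(F)$ belong to the subgraph $H$, which is what is needed to run the separation step above.
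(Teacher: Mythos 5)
Your proof is correct. The paper does not prove this lemma at all---it is quoted verbatim from Diestel (Lemma 4.2.1)---and your argument is essentially the standard one given there: part (i) is the same component-containment observation, and your path-and-first-exit argument for part (ii) is just a concrete implementation of the usual connectedness/separation step (writing $F'\subseteq \mathbb{R}^2\setminus\partial F = F\cup(\mathbb{R}^2\setminus\overline{F})$ and using that $F'$ is connected and meets $F$), so nothing further is needed.
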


\begin{observation}\label{outerface} If $G$ be a plane graph with the outerface $F_G$ and $H$ be a subgraph of $G$ with the outerface $F_H$, then $F_G\subseteq F_H$. Furthermore, each vertex $u\in V(H)\cap\partial(F_G)$ belongs to the $\partial(F_H)$.
\end{observation}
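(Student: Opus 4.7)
The plan is to derive both halves of the observation directly from Lemma 4.2.1 of \cite{Diestel} cited immediately above, using only the defining property (recalled in the introduction) that the outerface of a plane graph is the unique unbounded open region determined by the embedding.

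First I would apply part (i) of the Diestel lemma to the pair $(G,H)$ with $F := F_G$: since $H$ is a subgraph of $G$ and $F_G$ is a face of $G$, the lemma produces a face $F'$ of $H$ with $F_G \subseteq F'$. To finish the first assertion I must identify $F'$ as the outerface $F_H$. The key observation here is topological: $F_G$ is unbounded in the plane because it is the outerface of $G$, so $F' \supseteq F_G$ is unbounded as well; any plane graph has exactly one unbounded face, namely its outerface, which forces $F' = F_H$ and hence $F_G \subseteq F_H$.

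For the second statement, fix $u \in V(H)\cap \partial(F_G)$. From the inclusion $F_G \subseteq F_H$ just established I would pass to closures to get $u \in \overline{F_G} \subseteq \overline{F_H}$. Because faces are maximal open regions and $u$ is a vertex of $H$, the point $u$ cannot lie in the interior of any face of $H$; combined with $u \in \overline{F_H}$, this places $u$ on the frontier $\partial(F_H)$, as required.

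I do not anticipate any genuine obstacle: the argument is essentially an unpacking of Diestel's lemma together with the characterization of the outerface as the unbounded face. The only mildly subtle point is the pinning-down of the face $F'$ produced by the lemma as precisely $F_H$; this is where the unboundedness of outerfaces, used implicitly throughout the paper but never singled out as a lemma, is invoked.
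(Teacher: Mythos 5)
Your proof is correct and follows exactly the route the paper intends: the Observation is stated as an immediate consequence of the quoted Diestel Lemma 4.2.1, pinned down by the fact that a finite plane graph has a unique unbounded face, and your closure argument for the vertex statement fills in the only detail left implicit. Nothing to add.
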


\begin{lemma}\label{2.13} Let $G$ be a connected plane graph, $x$ a cut-vertex in $G$, and $H$ a component of $G-x$. If $G_1=\langle\{x\}\cup V(H)\rangle$ and $G_2=\langle V(G)\setminus V(H)\rangle$, then:
\begin{itemize}
  \setlength\itemsep{.01em}
\item[(i)]
 $G_1$ has a face containing $G_2\setminus\{x\}$.
\item[(ii)]
 $G_2$ has a face containing $G_1\setminus\{x\}$.
\item[(iii)]
 $G$ has a face incident to $x$ at least twice.
\end{itemize}
\end{lemma}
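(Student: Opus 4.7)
The plan is to apply Lemma~4.2.1 together with elementary topology of plane subgraphs; parts (i) and (ii) are structurally analogous, and (iii) will be deduced from them. I would treat (ii) first, as it is the simplest: $G_1 \setminus \{x\} = H$ is a single component of $G-x$, hence connected, and as a point set in the plane it is disjoint from $G_2$ (they share no edges, and the only common vertex $x$ has been removed). Therefore $H$ is a connected subset of $\mathbb{R}^2 \setminus G_2$, and hence it lies in a single connected component of that open set, i.e., inside one face of $G_2$.

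For (i), the plan is analogous but more delicate, since $G_2 \setminus \{x\}$ may fail to be connected when $G-x$ has more than two components. I would apply Lemma~4.2.1 to the subgraph $G_1 \subseteq G$: each face of $G$ is contained in a face of $G_1$. Fixing a vertex $v \in V(G_2) \setminus \{x\}$ and a face of $G$ incident to $v$ yields a candidate face $F_1$ of $G_1$ containing $v$. I would then argue that $F_1$ contains all of $G_2 \setminus \{x\}$ by combining the connectedness of $G_2$ (it contains $x$, which joins all components of $G_2 \setminus \{x\}$) with the planar topology of $G_1$ near $x$: every vertex of $G_2 \setminus \{x\}$ is reachable from $v$ in $G_2$, either by a path in $G_2 \setminus \{x\}$ (which stays in one face of $G_1$) or by a path through $x$ (in which case the corresponding edges of $G_2$ at $x$ emanate into the same face of $G_1$, namely $F_1$).

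For (iii), apply (i) and (ii) to obtain faces $F_1$ of $G_1$ and $F_2$ of $G_2$ containing $G_2 \setminus \{x\}$ and $G_1 \setminus \{x\}$ respectively. Both $F_1$ and $F_2$ have $x$ on their boundary, witnessed by the edges of $G_2$ entering $F_1$ at $x$ (respectively, edges of $G_1$ entering $F_2$ at $x$). A connected component $F^*$ of the intersection $F_1 \cap F_2$ is disjoint from $G_1 \cup G_2 = G$ and is therefore a face of $G$. Its boundary meets $x$ once at the corner of $F_1$ at $x$ (flanked by two $G_1$-edges) and once at the corner of $F_2$ at $x$ (flanked by two $G_2$-edges), so $x$ appears on $\partial F^*$ at least twice, which is (iii).

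The main obstacle will be the rigorous verification of (i) when $G_2 \setminus \{x\}$ is disconnected: ensuring that all components of $G_2 \setminus \{x\}$ fit inside a single face of $G_1$ requires careful analysis of the cyclic order of edges at $x$ in $G$ and how the planar embedding of $G_1$ partitions a neighborhood of $x$. Once this local structure at $x$ is pinned down, the global argument using Lemma~4.2.1 is straightforward.
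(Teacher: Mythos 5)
Your part (ii) is correct and is exactly the paper's argument: $G_1\setminus\{x\}=H$ is connected and disjoint from $G_2$, hence lies in a single face of $G_2$. The genuine gap is in part (i), at precisely the step you postpone as ``careful analysis of the cyclic order of edges at $x$'': the claim that all edges of $G_2$ at $x$ emanate into one and the same face $F_1$ of $G_1$ is false in general, and no local analysis can repair it, because statement (i) itself fails as soon as $G-x$ has three or more components. Concretely, let $G$ be the triangle $xabx$ together with two pendant edges $xc$ and $xd$, embedded with $c$ inside the triangle and $d$ outside, and take $H$ to be the component $\{a,b\}$ of $G-x$. Then $G_1$ is the triangle, $G_2\setminus\{x\}$ consists of $c$ and $d$, the vertex $c$ lies in the bounded face of $G_1$ while $d$ lies in the unbounded face, and the two $G_2$-edges at $x$ point into different faces of $G_1$. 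So your argument for (i) proves it only under the additional hypothesis that $G_2\setminus\{x\}$ is connected, i.e.\ that $G-x$ has exactly two components. (For what it is worth, the paper's own proof has the same hidden assumption: it joins any two vertices of $G_2$ by a path ``independent from $x$'', which need not exist; so in the only case where the statement is actually true, your route and the paper's coincide.)

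Part (iii) is true unconditionally, and your derivation of it from (i) and (ii) (a component $F^*$ of $F_1\cap F_2$ is disjoint from $G$, hence a face of $G$, with two corners at $x$) mirrors the paper's proof and is acceptable at that level of rigor --- but as written it rests on the unavailable part (i). To make (iii) safe you should either weaken what you ask of $F_1$ (it suffices that some $G_2$-edge at $x$ points into $F_1$, not that $F_1$ contains all of $G_2\setminus\{x\}$) or prove (iii) directly from the rotation at $x$: if every face of $G$ met $x$ in only one corner, then for any two edges consecutive in the cyclic order at $x$ the boundary walk of the face in that corner would connect their endpoints while avoiding $x$, forcing all neighbours of $x$ into one component of $G-x$ and contradicting that $x$ is a cut-vertex. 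As it stands, the proposal has a genuine gap at (i) and, through it, an unsupported step in (iii).
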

\begin{proof}
(i) Two subgraphs $G_1$ and $G_2$ are connected and they have only one vertex, $x$, in common. For a vertex $x_1(\ne x)$ in $G_1$ there exists a face $F_2$ in $G_2$ such that $x_1\in F_2$. Now consider another vertex $x'_1(\ne x)$ in $G_1$. There exists a path between $x_1$ and $x'_1$ in $G_1$ independent from $x$. This path is induced from $G$ and does not meet the graph $G_2$ and so $x'_1\in F_2$. Hence, all vertices of $G_1$ are in the the face $F_2$ except the vertex $x$ that lies on the boundary of $F_2$ with the boundary walk $xy_1y_2\cdots y_{s_2}x$.\\
(ii) Similarly, for a vertex $x_2\ne x$ in $G_2$ there exists a face $F_1$ in $G_1$ such that $x_2\in F_1$. Now consider another vertex $x'_2\ne x$ in $G_2$. There exists a path between $x_2$ and $x'_2$ in $G_2$ independent from $x$. This path is induced from $G$ and does not meet the graph $G_1$ and so $x'_2\in F_1$. Hence, all vertices of $G_2$ are in the face $F_1$ except the vertex $x$ that lies on the boundary of $F_1$ with the boundary walk $xz_1z_2\cdots z_{s_1}x$. \\
(iii) Now, $F=F_1\cap F_2$ is a region in the plane and $F\cap G=\emptyset$. Therefore, $F$ is a face of $G$ with the boundary walk $xy_1y_2\cdots y_{s_2}xz_1z_2\cdots z_{s_1}x$ such that $y_i\ne z_j$ for all $i$ and $j$. 
\end{proof}

\begin{corollary}\label{cor345}
Let $G$ be a connected planar graph and $x$ be a cut-vertex of $G$.
\begin{itemize}
  \setlength\itemsep{.01em}
\item[(i)]
 $x$ lies on a face with the length at least $4$.
 \item[(ii)]
 If the length of each face incident to $x$ is less than $6$, then $x$  has a neighbour with degree $1$.
\end{itemize}
\end{corollary}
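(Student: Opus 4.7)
The plan is to deduce both parts directly from Lemma~\ref{2.13}(iii). That lemma already exhibits a face $F$ of $G$ that is incident to the cut-vertex $x$ at least twice; moreover, reading its proof carefully, the boundary walk of $F$ has the explicit form
\[
x y_1 y_2 \cdots y_{s_2} x z_1 z_2 \cdots z_{s_1} x,
\]
where the $y_i$ come from one side of the cut-vertex (inside $G_1$), the $z_j$ come from the other side (inside $G_2$), and $s_1,s_2\ge 1$. Hence the length of $F$ is exactly $s_1+s_2+2$.

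For part (i), I would just note that $s_1,s_2\ge 1$ forces $\ell(F)\ge 4$, and this face is incident to $x$.

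For part (ii), the hypothesis that every face incident to $x$ has length less than $6$ applies in particular to $F$, so $s_1+s_2+2\le 5$, i.e.\ $s_1+s_2\le 3$. Together with $s_1,s_2\ge 1$, one of the two (say $s_2=1$, by symmetry) must equal $1$. Then the boundary walk of $F$ contains the consecutive subword $x y_1 x$: the edge $xy_1$ is traversed twice in immediate succession. The main (only non-routine) step is justifying that in a simple plane graph this forces $\deg(y_1)=1$: the face-tracing rule at any vertex of degree at least $2$ exits along an edge distinct from the one it just arrived on, so a subwalk $x y_1 x$ in a face boundary can occur only when $y_1$ is a leaf. Once this is in place, since $y_1\in V(H)$ and $H$ is a component of $G-x$, every edge of $G$ at $y_1$ already lies in $G_1$, so $\deg_G(y_1)=\deg_{G_1}(y_1)=1$, and $y_1$ is the required degree-$1$ neighbour of $x$. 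Everything beyond invoking Lemma~\ref{2.13}(iii) is a short length count plus this single local observation about face-boundary walks, so no serious combinatorial obstacle is expected.
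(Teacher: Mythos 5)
Your proposal is correct and follows essentially the same route as the paper: both parts are read off from the boundary walk $xy_1\cdots y_{s_2}xz_1\cdots z_{s_1}x$ of Lemma~\ref{2.13}(iii), giving length $s_1+s_2+2\ge 4$ for (i), and for (ii) the bound $s_1+s_2\le 3$ forces some $s_i=1$, whence the repeated subwalk $xy_1x$ (or $xz_1x$) yields a degree-$1$ neighbour of $x$. Your explicit face-tracing justification that such a subwalk forces a leaf is a detail the paper leaves implicit, but it is the same underlying argument.
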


\begin{proof} By Lemma \ref{2.13}, $G$ has a face $F$ with the boundary walk $xy_1y_2\cdots y_{s_2}xz_1z_2\cdots z_{s_1}x$ such that $y_i\ne z_j$ for all $i$ and $j$. Thus, the length of this walk is equal to $s_1+s_2+2$. We know  $s_1,s_2\ge1$ that implies (i). If each face incident to $x$ has the length less than 6, then $s_1+s_2+2\le 5$ and so we have three cases: $s_1=s_2=1$, $s_1=1,s_2=2$ and $s_1=2,s_2=1$. In the first, we deduce that $G=P_3$ with two endvertices adjacent to $x$.
In the second and third, $z_1$ or $y_1$, respectively, is an endvertex adjacent to $x$.
\end{proof}

\begin{lemma} Let $G$ be a $1$-NPG, each cut-vetrtex in $G$ lies on the exceptional face of $G$.
\end{lemma}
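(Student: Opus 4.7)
My plan is to argue by contradiction, combining the long-face consequence of a cut-vertex with the length bound supplied by Lemma \ref{d1<6}.

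First I would assume, toward a contradiction, that $x$ is a cut-vertex of the $1$-NPG $G$ that does \emph{not} lie on the exceptional face. Then every face of $G$ incident to $x$ is a non-exceptional face, and hence has length exactly $d_1$. Since a $k$-regular $1$-NPG has $k\in\{3,4,5\}$, Lemma \ref{d1<6} gives $d_1\le 5$; in particular, every face incident to $x$ has length strictly less than $6$. I would then apply Corollary \ref{cor345}(ii), whose hypothesis is exactly this, to conclude that $x$ must have a neighbour of degree $1$. But $G$ is $k$-regular with $k\ge 3$, so every vertex of $G$ has degree at least $3$ and no neighbour of $x$ can be a pendant. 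This contradiction shows that $x$ lies on the exceptional face, as required.

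There is no serious obstacle here: the substantive content has already been packaged into Lemma \ref{2.13}(iii) (a cut-vertex always produces a face whose boundary walk visits it at least twice, forcing length $\ge 4$) and into Corollary \ref{cor345}(ii) (pushing that length below $6$ forces a pendant at $x$ via the small case analysis $s_1+s_2\le 3$). The only bookkeeping point to check is that Corollary \ref{cor345} applies in our setting, which is immediate: a $1$-NPG is connected and planar by definition, and $x$ is assumed to be a cut-vertex. Regularity of $G$ with $k\ge 3$ then closes the argument in one line.
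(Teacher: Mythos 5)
Your proof is correct and uses exactly the same ingredients as the paper's own argument: Corollary \ref{cor345}(ii) (a cut-vertex with only short incident faces would have a degree-one neighbour) together with the bound $d_1\le 5$ from Lemma \ref{d1<6} and the fact that a $1$-NPG has no vertex of degree less than $3$. The only difference is cosmetic --- you phrase it as a contradiction while the paper argues via the contrapositive --- so the two proofs coincide in substance.
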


\begin{proof}
Since $G$ is a $1$-NPG, $G$ has no endvertex and so by Corollary \ref{cor345}, each cut-vertex lies on a face of the length at least $6$ and by Lemma \ref{d1<6}, $x$ lies on the exceptional face of $G$.
\end{proof}

\begin{corollary}\label{cor-1npg} Let $G$ be a $1$-NPG and $x$ is a cut-vertex. The length of the unique exceptional face is at least $6$ and $x$ lies on it.
\end{corollary}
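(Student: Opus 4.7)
The plan is to combine the immediately preceding lemma (which places any cut-vertex $x$ on the exceptional face) with the key dichotomy in Corollary~\ref{cor345}(ii) and the bound on $d_1$ given by Lemma~\ref{d1<6}. The previous lemma already settles the ``$x$ lies on the exceptional face'' half of the statement, so what remains is to argue that this exceptional face has length at least $6$.

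To do this, I would first recall that a $1$-NPG is by definition regular, and since degree~$1$ or~$2$ cases were excluded in the discussion right after the definition of a $k$-regular $1$-NPG, $G$ is $k$-regular with $k\in\{3,4,5\}$. In particular $G$ has no vertex of degree~$1$. Applying the contrapositive of Corollary~\ref{cor345}(ii) to the cut-vertex $x$, it follows that $x$ must be incident to at least one face $F$ of length at least~$6$. Meanwhile, Lemma~\ref{d1<6} bounds the common face length $d_1$ of the $f-1$ non-exceptional faces by $d_1\le 5$, so $F$ cannot be any of those. Hence $F$ must be the unique exceptional face, and its length $d_2\ge 6$, as required.

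There is essentially no major obstacle here: the corollary is a short synthesis of three results already in hand. The only point to be careful about is the logical chain — namely, that regularity with $k\ge 3$ rules out the ``neighbour of degree $1$'' conclusion of Corollary~\ref{cor345}(ii), forcing a face of length $\ge 6$ through $x$, and then the bound $d_1\le 5$ from Lemma~\ref{d1<6} uniquely identifies that face as the exceptional one. Combining this with the preceding lemma, which asserts $x$ lies on the exceptional face, completes the proof.
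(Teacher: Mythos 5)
Your argument is correct and is essentially the paper's own: the corollary is an immediate restatement of the preceding lemma, whose proof likewise uses the absence of vertices of degree $1$ (regularity with $k\ge 3$) together with Corollary~\ref{cor345} to force a face of length at least $6$ through $x$, and Lemma~\ref{d1<6} ($d_1\le 5$) to identify that face as the exceptional one. No gaps.
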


\begin{definition}\cite[Definition 4.1.16]{West-2001}
A block of a graph $G$ is a maximal connected subgraph of $G$ that has no cut-vertex. If $G$ itself is connected and has no cut-vertex, then $G$ is a block. 
\end{definition}
\begin{remark}\cite[Pages 155,156]{West-2001}\label{rem0}
The blocks of a graph are its isolated vertices, its cut-edges, and its maximal 2-connected subgraphs.
Two blocks in a graph share at most one vertex, it must be a cut-vertex and a cut-vertex is belong to at least two blocks. 
A graph that is not a single block has at least two blocks (leaf blocks) that each contain exactly one cut-vertex of the graph.
\end{remark}

\begin{theorem} There is no finite, planar, regular, connected graph that has all but one face of the same degree and a single face of a different degree.
\end{theorem}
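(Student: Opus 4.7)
My plan is to argue by contradiction. Suppose $G$ is a 1-NPG; by Theorem \ref{th.2.3} (the 2-connected case due to Keith et al.), $G$ must contain a cut-vertex, so the strategy is to peel off a leaf block of $G$ and exhibit it as a configuration excluded by Theorem \ref{th3.8}. This is precisely why Theorem \ref{th3.8} was stated in its asymmetric form, with a single reduced-degree vertex on the outer face: it is tailor-made to rule out a leaf block of a hypothetical non-2-connected 1-NPG.

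First I would invoke Remark \ref{rem0} to choose a leaf block $B$ of $G$ containing exactly one cut-vertex $x$ of $G$. Since $G$ is $k$-regular with $k \in \{3,4,5\}$ and therefore has no endvertex, $B$ can be neither an isolated vertex nor a single cut-edge (the non-$x$ endpoint of a cut-edge would have $G$-degree $1$), so $B$ is a maximal $2$-connected subgraph of $G$. For every $v \in V(B)\setminus\{x\}$, the vertex $v$ is not a cut-vertex of $G$ (the leaf block contains only one), so all $G$-edges at $v$ lie in $B$ and $\deg_B(v) = k$. The cut-vertex $x$ has at least one neighbor outside $B$ and, since $B$ is $2$-connected, at least two inside, so $k_0 := \deg_B(x)$ satisfies $2 \leq k_0 \leq k-1$.

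The delicate step, which I expect to require the most care, is securing a planar embedding of $B$ in which $x$ lies on the outer face and every internal face has length $d_1$. I would start from the induced embedding of $B$ coming from the given embedding of $G$, re-embed $G$ at $x$ if necessary so that all blocks of $G$ attached to $x$ other than $B$ occupy a single face $f$ of $B$, and then apply \cite[Proposition 10.5]{Bondy-Murty} to transfer $f$ to the outer face of $B$. To control the internal face lengths I would combine Lemma \ref{d1<6}, which gives $d_1 \leq 5$, with Corollary \ref{cor-1npg}, which gives $d_2 \geq 6$: applying Lemma \ref{2.13}(iii) to the component $V(B)\setminus\{x\}$ of $G - x$ shows that the exceptional face of $G$ must touch vertices outside $V(B)$ and so cannot itself be a face of $B$, forcing every internal face of $B$ to be a common face of $G$, of length $d_1$. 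With this in hand, $B$ meets all the hypotheses of Theorem \ref{th3.8}, which yields the desired contradiction.
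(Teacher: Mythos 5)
Your proposal is correct and follows essentially the same route as the paper: pass to a leaf block $B$ with its unique cut-vertex $x$ (Remark \ref{rem0}), check $2\le\deg_B(x)\le k-1$ while $\deg_B(v)=k$ for all other vertices, place $x$ on the outer face with every internal face of $B$ of length $d_1$, and contradict Theorem \ref{th3.8}. The only difference is cosmetic, in how the embedding is normalized: the paper fixes an embedding of $G$ whose outerface is the exceptional face and invokes Observation \ref{outerface}, whereas you re-embed $B$ via \cite[Proposition 10.5]{Bondy-Murty} after using Lemma \ref{2.13} together with Corollary \ref{cor-1npg} and Lemma \ref{d1<6} to locate the exceptional face inside the attachment face of $B$ — the underlying lemmas and the contradiction are the same.
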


\begin{proof} Assume by the contrary, $G$ is a 1-NPG
with at least one cut-vertex. By \cite[Lemma 4]{Keith-Froncek-Kreher-1}, we have $3\le \deg(w)\le 5$ for all $w\in V(G)$.
We consider an embedding of $G$ such that the exceptional face of $G$ be the outerface of $G$.
Since $G$ is not a single block, by Remark \ref{rem0}, $G$ has two leaf blocks that each contain exactly one cut-vertex of $G$. We consider a leaf block $B$, containing the vertex $x$ as a cut-vertex of $G$. Since $G$ is connected, $B$ is not an isolated vertex and $x$ has at least a neighbour $y$ in $B$ with $\deg_B(y)=\deg_G(y)\ge 3$ and so $B$ has at least four vertices and $B$ is not a cut-edge of $G$. Therefore, $B$ is a maximal 2-connected subgraph of $G$. Immediately, we have $\deg_B(x)\ge 2$ and since $x$ has at least one neighbour in another block we see that $\deg_B(x)\le k-1$. On the other hand, for each vertex $z\in V(B)\setminus\{x\}$ we have $\deg_B(z)=\deg_G(x)=k$ and $2\le\deg_B(x)\le k-1$. 
By Corollary \ref{cor-1npg}, $x$ lies on the boundary of  the unique exceptional face (=outerface) of $G$ with the length at least 6.  
By Lemma \ref{2.13}, this face incident to $x$ at least twice.
 Let $xy_1y_2\cdots y_{s_2}xz_1z_2\cdots z_{s_1}x$ be the boundary walk of the outerface of $G$ such that $y_i\ne z_j$ for all $i$ and $j$.

 Here, the block $B$ plays the role of $G_1$ in Lemma \ref{2.13} \hfill and \hfill so \hfill a \hfill part \hfill of \hfill the \hfill boundary \hfill walk\\ $xy_1y_2\cdots y_{s_2}xz_1z_2\cdots z_{s_1}x$ , say $xy_1y_2\cdots y_{s_2}x$, is the boundary of a face of $B$. Now, by Observation \ref{outerface}, $xy_1y_2\cdots y_{s_2}x$ is the boundary walk of the outerface of $B$. Indeed, by 2-connectivity of $B$, $xy_1y_2\cdots y_{s_2}x$ is a cycle.
 
 Since the internal faces of the block $B$ are the internal faces of $G$, they have the same lengths. Therefore, $B$ is a connected planar graph such that all its internal faces have the same lengths, all its vertices, except $x$ lying on the outerface, have the same degree $k$ and $2\le \deg_B(x)\le k-1$. This contradicts the Theorem \ref{th3.8}. Consequently, there is no 1-NPG.
\end{proof}



\end{document}